\theoremstyle{plain}
\newtheorem{theorem}{Theorem}[section]
\newtheorem{lemma}[theorem]{Lemma}
\newtheorem{proposition}[theorem]{Proposition}
\newtheorem{corollary}[theorem]{Corollary}
\theoremstyle{definition}
\newtheorem{notation}[theorem]{Notations}
\newtheorem{example}[theorem]{Example}
\theoremstyle{remark}
\newtheorem{remark}[theorem]{Remark}
\begin{document}
	\title[On the cozero-divisor graphs assosciated to rings ]{On the cozero-divisor graphs assosciated to rings}
\author[Praveen Mathil, Barkha Baloda Jitender Kumar]{Praveen Mathil, $\text{Barkha Baloda}^{^*}$, Jitender Kumar}
	\address{Department of Mathematics, Birla Institute of Technology and Science Pilani, Pilani, India}
	\email{maithilpraveen@gmail.com,barkha0026@gmail.com,jitenderarora09@gmail.com}

\begin{abstract}
Let $R$ be a ring with unity. The cozero-divisor graph of a ring $R$, denoted by
$\Gamma'(R)$, is an undirected simple graph whose vertices are the set of all
non-zero and non-unit elements of $R$, and two distinct vertices $x$ and $y$  are
adjacent if and only if $x \notin Ry$ and $y \notin Rx$. In this paper, first we study the Laplacian spectrum of $\Gamma'(\mathbb{Z}_n)$. We show that the graph $\Gamma'(\mathbb{Z}_{pq})$ is Laplacian integral. Further, we obtain the Laplacian spectrum of  $\Gamma'(\mathbb{Z}_n)$ for $n = p^{n_1}q^{n_2}$, where $n_1, n_2 \in \mathbb{N}$ and $p, q$ are distinct primes. In order to study the Laplacian spectral radius and algebraic connectivity of $\Gamma'(\mathbb{Z}_n)$, we characterized the values of $n$ for which the Laplacian spectral radius is equal to the order of $\Gamma'(\mathbb{Z}_n)$. Moreover, the values of $n$ for which the algebraic connectivity and vertex connectivity of $\Gamma'(\mathbb{Z}_n)$ coincide are also described. At the final part of this paper, we obtain the Wiener index of $\Gamma'(\mathbb{Z}_n)$ for arbitrary $n$.
\end{abstract}
\subjclass[2020]{05C25, 05C50}
\keywords{Cozero-divisor graph, ring of integers modulo $n$,  Laplacian spectrum, Wiener index \\ *  Corresponding author}
\maketitle

\section{Introduction}
The study of algebraic structures through graph theoretic properties has emerged as a fascinating research discipline in the past three decades, it has provided not only intriguing and exciting results but also opened up a whole new
domain yet to be explored. At the beginning, the idea to associate a graph with  ring structure was appeared in \cite{beck1988coloring}. The cozero-divisor graph related to a commutative ring was introduced by Afkhami \emph{et al.} in \cite{afkhami2011cozero}. The cozero-divisor graph of a ring $R$ with unity, denoted by
$\Gamma'(R)$, is an undirected simple graph whose vertex set is  the set of all
non-zero and non-unit elements of $R$, and two distinct vertices $x$ and $y$  are adjacent if and only if $x \notin Ry$ and $y \notin Rx$. They discussed certain basic properties on the structure of cozero-divisor graph and studied the relationship between the zero divisor graph and the cozero-divisor graph over ring structure. In \cite{afkhami2012cozero}, they investigated the complement of cozero-divisor graph and characterized the commutative rings with forest, star or unicyclic cozero-divisor graphs. Akbari \emph{et al.} \cite{akbari2014some}, studied the cozero-divisor graph associated to the polynomial ring. Some of the work associated with the cozero-divisor graph on the rings can be found in \cite{afkhami2012planar, afkhami2013cozero, akbari2014commutative, bakhtyiari2020coloring, mallika2017rings, nikandish2021metric}. The spectral graph theory is associated with spectral properties including investigation of charateristic polynomials, eigenvalues, eiegnvectors of matrices related with graphs. Recently, Chattopadhyay \emph{et al.} \cite{chattopadhyay2020laplacian} studied the Laplacian spectrum of the zero divisor graph of the ring $\mathbb{Z}_n$. They proved that the zero divisor graph of the ring $\mathbb{Z}_{p^l}$ is Laplacian integral for every prime $p$ and a positive integer $l \geq 2$. The work on spectral radius, viz. adjacency spectrum, Laplacian spectrum, signless Laplacian spectrum, distance signless spectrum etc.,  of the zero-divisor graphs can be found in \cite{chattopadhyay2020laplacian,magi2020spectrum, patil2021spectrum, pirzada2021signless, pirzada2020distance, pirzada2021normalized, rather2021laplacian}. The Wiener index, which is a distance based topological index, has various applications in pharmaceutical science, chemistry etc., see \cite{dobrynin2001wiener, janezic2015graph,wiener1947structural, xu2014survey}. Recently, the Wiener index of the zero divisor graph of the ring $\mathbb{Z}_n$ of integers modulo $n$ has been studied in \cite{asir2021wiener}.

In this paper, we study the Laplacian spectrum and the Wiener index of the cozero-divisor graph associated with the ring $\mathbb{Z}_n$. The paper is arranged as follows: In Section 2, we recall necessary results and fix our notations which are used throughout the paper. In Section 3, we study the structure of $\Gamma'(\mathbb{Z}_n)$. Section 4 deals with the Laplacian spectrum of the cozero-divisor graph of the ring $\mathbb{Z}_n$ for $n = p^{n_1}q^{n_2}$, where $p, q$ are distinct primes. In Section 5, the Laplacian spectral radius and the algebraic connectivity of $\Gamma'(\mathbb{Z}_n)$ have been investigated. The Wiener index of $\Gamma'(\mathbb{Z}_n)$ has been obtained in Section 6.

\section{preliminaries}
In this section, we recall  necessary definitions, results and notations of graph theory from \cite{westgraph}.
A graph $\Gamma$ is a pair  $ \Gamma = (V, E)$, where $V = V(\Gamma)$ and $E = E(\Gamma)$ are the set of vertices and edges of $\Gamma$, respectively. Let $\Gamma$ be a graph. The \emph{order} of a graph $\Gamma$ is the number of vertices of $\Gamma$. 
 Two distinct vertices $x, y \in \Gamma$ are $\mathit{adjacent}$, denoted by $x \sim y$, if there is an edge between $x$ and $y$. Otherwise, we denote it by $x \nsim y$.   
 The set $N_{\Gamma}(x)$ of all the vertices adjacent to $x$ in $\Gamma$ is said to be the \emph{neighbourhood} of $x$. 
 A \emph{subgraph} $\Gamma'$ of a graph $\Gamma$ is a graph such that $V(\Gamma') \subseteq V(\Gamma)$ and $E(\Gamma') \subseteq E(\Gamma)$. If $U \subseteq V(\Gamma)$ then the subgraph of $\Gamma$ induced by $U$, denoted by $\Gamma(U)$, is the graph with vertex set $U$ and two vertices of $\Gamma(U)$ are adjacent if and only if they are adjacent in $\Gamma$. The \emph{complement} $\overline{\Gamma}$ of $\Gamma$ is a graph with same vertex set as $\Gamma$ and distinct vertices $x, y$ are adjacent in $\overline{\Gamma}$ if they are not adjacent in $\Gamma$. A graph $\Gamma$ is said to be $complete$ if every two distinct vertices are adjacent. The complete graph on $n$ vertices is denoted by $K_n$. A path in a graph is a sequence of distinct vertices with the property that each vertex in the sequence is adjacent to the next vertex of it. The graph $\Gamma$ is said to be \emph{connected} if there is path between every pair of vertex.
 
 The distance between any two vertices $x$ and $y$ of $\Gamma$, denoted by $d(x,y)$, is the number of edges in a shortest path between $x$ and $y$. The Wiener index is defined as the sum of all distances between every pair of vertices in the graph that is the Wiener index of a graph $\Gamma$ is given by
\[ W(\Gamma) = \dfrac{1}{2}\sum_{u \in V(\Gamma)}\sum_{v \in V(\Gamma)} d(u,v) \]
The \emph{diameter} of a connected graph $\Gamma$, written as \rm{diam}$(\Gamma)$, is the maximum of the distances between vertices. If the graph consists of a single vertex, then the diameter is $0$. The \emph{degree} of a vertex $v \in \Gamma$, denoted by $\text{deg}(v)$, is the number of edges adjacent to $v$. The smallest degree among the vertices of $\Gamma$ is called the \emph{minimum degree} of $\Gamma$ and it is denoted by $\delta(\Gamma)$. A \emph{vertex cut-set} in a connected graph $\Gamma$ is a set $X$ of vertices such that the remaining subgraph $\Gamma \setminus X$, by removing the set $X$ is disconnected or has only one vertex. The \emph{vertex connectivity} of a connected graph $\Gamma$, denoted by $\kappa(\Gamma)$, is the minimum size of a vertex cut set. Let  $\Gamma_1$ and  $\Gamma_2$ be two graphs. The \emph{union} $\Gamma_1 \cup \Gamma_2$ is the graph with $V(\Gamma_1 \cup \Gamma_2) = V(\Gamma_1) \cup V(\Gamma_2)$ and $E(\Gamma_1 \cup \Gamma_2) = E(\Gamma_1) \cup E(\Gamma_2)$. The \emph{join} $\Gamma_1 \vee \Gamma_2$ of $\Gamma_1$ and $\Gamma_2$ is the graph obtained from the union of $\Gamma_1$ and $\Gamma_2$ by adding new edges from each vertex of $\Gamma_1$ to every vertex of $\Gamma_2$.

  Let $\Gamma$ be a graph on $k$ vertices and $V(\Gamma) = \{u_1, u_2, \cdots, u_k\}$. Suppose that $\Gamma_1, \Gamma_2, \cdots, \Gamma_k$ are $k$ pairwise disjoint graphs. Then the \emph{generalised join graph} $\Gamma[\Gamma_1, \Gamma_2, \cdots, \Gamma_k]$ of $\Gamma_1, \Gamma_2, \cdots, \Gamma_k$ is the graph formed by replacing each vertex $u_i$ of $\Gamma$ by $\Gamma_i$ and then joining each vertex of $\Gamma_i$ to every vertex of $\Gamma_j$ whenever $u_i \sim u_j$ in $\Gamma$ (cf. \cite{schwenk1974computing}).
 
For a finite simple (without multiple edge and loops) undirected graph $\Gamma$ with vertex set $V(\Gamma) = \{u_1, u_2, \ldots, u_k\}$, the \emph{adjacency matrix} $A(\Gamma)$ is defined as the $k\times k$ matrix whose $(i, j)th$ entry is $1$ if $u_i \sim u_j$ and $0$ otherwise. We denote the diagonal matrix by $D(\Gamma) = {\rm diag}(d_1, d_2, \ldots, d_k)$, where $d_i$ is the degree of the vertex $u_i$ of $\Gamma$. The \emph{Laplacian matrix} $\mathcal{L}(\Gamma)$ of $\Gamma$ is the matrix $D(\Gamma) - A(\Gamma)$. The matrix $\mathcal{L}(\Gamma)$ is  a symmetric and positive semidefinite, so that its eigenvalues are real and non-negative. Furthermore, the sum of each row (column) of $\mathcal{L}(\Gamma)$ is zero. 
 The eigenvalues of $\mathcal{L}(\Gamma)$ are called the \emph{Laplacian eigenvalues} of $\Gamma$ and are taken as  $\lambda_1(\Gamma) \geq \lambda_2(\Gamma) \geq \cdots \geq \lambda_n(\Gamma) = 0$. The second smallest Laplacian eigenvalue of $\mathcal{L}(\Gamma)$, denoted by $\mu(\Gamma)$, is called the \emph{algebraic connectivity} of $\Gamma$. The largest Laplacian eigenvalue $\lambda(\Gamma)$ of $\mathcal{L}(\Gamma)$ is called the \emph{Laplacian spectral radius} of $\Gamma$. Now let $\lambda_{1}(\Gamma) \geq \lambda_{2}(\Gamma) \geq \cdots \geq \lambda_{r}(\Gamma) = 0$ be the distinct eigenvalues of $\Gamma$ with multiplicities $\mu_1, \mu_2, \ldots, \mu_r$, respectively. The \emph{Laplacian spectrum} of $\Gamma$, that is the spectrum  of $\mathcal{L}(\Gamma)$, is represented as
 \begin{center}
$\displaystyle \Phi(\mathcal{L}(\Gamma)) = \begin{pmatrix}
\lambda_{1}(\Gamma) & \lambda_{2}(\Gamma) & \cdots& \lambda_{r}(\Gamma)\\
 \mu_1 & \mu_2 & \cdots & \mu_r
\end{pmatrix}$. 
\end{center}
Sometime we write $\Phi(\mathcal{L}(\Gamma)$  as $\Phi_{\mathcal{L}}(\Gamma)$ also. The following results are useful in the sequel.

\begin{theorem}\cite{cardoso2013spectra}\label{laplacianspectrum theoremforjoin}
Let $\Gamma$ be a graph on $k$ vertices having $V(\Gamma) = \{u_1, u_2, \cdots, u_k\}$ and let $\Gamma_1, \Gamma_2, \cdots, \Gamma_k$ be $k$ pairwise disjoint graphs on $n_1, n_2, \cdots, n_k$ vertices, respectively. Then the Laplacian spectrum of $\Gamma[\Gamma_1, \Gamma_2, \cdots, \Gamma_k]$ is given by
\begin{equation}\label{lapacianequation}
\Phi_{L}(\Gamma[\Gamma_1, \Gamma_2, \cdots, \Gamma_k]) = \bigcup\limits_{i=1}^{k} (D_{i} + (\Phi_{L}(\Gamma_i) \setminus \{0\})) \bigcup \Phi(\mathbb{L}(\Gamma))
    \end{equation}
where \[ D_i = \begin{cases} 
      \sum \limits_{u_j \sim u_i}n_j & ~~\text{if}~~ N_{\Gamma}(u_i) \neq \emptyset;\\
      0 & \rm{otherwise} 
    \end{cases}
\]
\begin{equation}
\mathbb{L}({\Gamma})  = \displaystyle \begin{bmatrix}
	D_{1}&  -p_{1,2} & \cdots & -p_{1,k}  \\
	-p_{2,1}& D_{2}   &\cdots&  -p_{2,k} \\ 
 \cdots & \cdots & \cdots & \cdots \\
 -p_{k,1} & -p_{k,2} & \cdots & D_{k}
	\end{bmatrix}
\end{equation}
such that \[ p_{i,j} = \begin{cases} 
      \sqrt{n_in_j} & ~~\text{if}~~ u_i \sim u_j~~ \text{in}~~ \Gamma\\
      0 & \rm{otherwise} 
    \end{cases}
\]
in (\ref{lapacianequation}), $(\Phi_{L}(\Gamma_i) \setminus \{0\}))$ means that one copy of the eigenvalue $0$ is removed from the multiset $\Phi_{L}(\Gamma_i)$, and $D_i+(\Phi_{L}(\Gamma_i) \setminus \{0\}))$ means $D_i$ is added to each element of $(\Phi_{L}(\Gamma_i) \setminus \{0\}))$.\\

Let $\Gamma$ be a weighted graph by assigning the weight $n_i = |V(\Gamma_i)|$ to the vertex $u_i$ of $\Gamma$ and $i$ varies from $1$ to $k$. Consider $L(\Gamma)= (l_{i,j})$ to be a $k \times k$ matrix, where 
\[ l_{i,j} = \begin{cases}
      -n_j &~~~~\text{if} ~~i \neq j ~~\text{and}~~u_i \sim u_j;\\
      \sum \limits_{u_i \sim u_r}n_r & ~~\text{if}~~ i=j;\\
      0 & \rm{otherwise.} 
    \end{cases}
\]
\end{theorem}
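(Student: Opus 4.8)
The plan is to exploit the block structure that the generalized join imposes on the Laplacian matrix and to split the ambient space $\mathbb{R}^N$, where $N = \sum_{i=1}^k n_i$, into two orthogonal $\mathcal{L}$-invariant subspaces whose contributions to the spectrum are exactly the two pieces on the right-hand side of \eqref{lapacianequation}. First I would order the vertices block by block, so that $\mathcal{L} = \mathcal{L}(\Gamma[\Gamma_1,\ldots,\Gamma_k])$ becomes a $k \times k$ array of blocks: the $i$-th diagonal block is $\mathcal{L}(\Gamma_i) + D_i I_{n_i}$, where the scalar $D_i = \sum_{u_j \sim u_i} n_j$ records the edges that the join adds from every vertex of $\Gamma_i$ to all vertices of the adjacent parts, while the $(i,j)$ off-diagonal block is $-J_{n_i \times n_j}$ (the all-ones matrix) when $u_i \sim u_j$ in $\Gamma$ and is the zero block otherwise.

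The decisive observation is that the all-ones vector $\mathbf{1}_{n_i}$ spans the kernel of $\mathcal{L}(\Gamma_i)$ and that $J_{n_i \times n_j} v = 0$ whenever $v \perp \mathbf{1}_{n_j}$. I would therefore set $W = \mathrm{span}\{\widehat{e}_i : 1 \le i \le k\}$, where $\widehat{e}_i$ places $\mathbf{1}_{n_i}$ in block $i$ and zero elsewhere, and let $W^\perp$ be its orthogonal complement, namely the vectors whose restriction to each block is orthogonal to $\mathbf{1}_{n_i}$. On $W^\perp$ the off-diagonal blocks annihilate every vector, so $\mathcal{L}$ acts block-diagonally; on block $i$ it acts as $\mathcal{L}(\Gamma_i) + D_i I_{n_i}$ restricted to $\mathbf{1}_{n_i}^\perp$. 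Choosing an eigenbasis of $\mathcal{L}(\Gamma_i)$ orthogonal to $\mathbf{1}_{n_i}$ shows that these eigenvalues are precisely $D_i + \mu$ with $\mu \in \Phi_L(\Gamma_i) \setminus \{0\}$, which reproduces $\bigcup_{i=1}^k (D_i + (\Phi_L(\Gamma_i) \setminus \{0\}))$ and accounts for $\sum_{i=1}^k (n_i - 1) = N - k$ eigenvalues.

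It remains to handle $W$, and this is where I would be most careful. Since both $W$ and $W^\perp$ are $\mathcal{L}$-invariant, the remaining $k$ eigenvalues come from the restriction of $\mathcal{L}$ to $W$. Computing $\mathcal{L}\,\widehat{e}_j$ and using $\mathcal{L}(\Gamma_j)\mathbf{1}_{n_j} = 0$ together with $J_{n_i \times n_j}\mathbf{1}_{n_j} = n_j \mathbf{1}_{n_i}$, one finds that in the (non-orthonormal) basis $\{\widehat{e}_i\}$ the restriction is represented by the weighted matrix $L(\Gamma)$ stated in the theorem, with $(L(\Gamma))_{ij} = -n_j$ for $u_i \sim u_j$ and $(L(\Gamma))_{ii} = D_i$. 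Passing to the orthonormal basis $\{\tfrac{1}{\sqrt{n_i}}\widehat{e}_i\}$ symmetrizes this matrix into $\mathbb{L}(\Gamma)$, whose off-diagonal entries become $-\sqrt{n_i n_j}$; since a change of orthonormal basis preserves eigenvalues, the spectrum of $\mathcal{L}|_W$ equals $\Phi(\mathbb{L}(\Gamma))$. Taking the multiset union of the two parts then yields \eqref{lapacianequation}. The principal obstacle is precisely the bookkeeping on $W$: one must verify carefully that $\{\widehat{e}_i\}$ spans an invariant subspace, that the similarity between $L(\Gamma)$ and the symmetric $\mathbb{L}(\Gamma)$ is exact, and that the multiplicities add up to $N$ so that no eigenvalue is double counted or omitted.
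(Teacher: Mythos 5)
Your argument is correct, but there is nothing in the paper to compare it against: the paper states this theorem with a citation to Cardoso, de Freitas, Martins and Robbiano and gives no proof of it at all. What you have reconstructed is essentially the standard proof (and the one in the cited reference): the orthogonal splitting $\mathbb{R}^N = W \oplus W^{\perp}$, with $W$ spanned by the per-block all-ones vectors $\widehat{e}_i$, is exactly the mechanism that makes the generalized join tractable. Your verification that both summands are $\mathcal{L}$-invariant, that $\mathcal{L}$ acts on $W^{\perp}$ block-diagonally as $\mathcal{L}(\Gamma_i)+D_i I_{n_i}$ restricted to $\mathbf{1}_{n_i}^{\perp}$, and that $\mathcal{L}|_W$ is represented by $L(\Gamma)$ in the basis $\{\widehat{e}_i\}$ and by the symmetric $\mathbb{L}(\Gamma)$ in the orthonormal basis $\{\widehat{e}_i/\sqrt{n_i}\}$, with $(N-k)+k=N$ eigenvalues in total, is precisely what is needed.

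Two remarks. First, your opening claim that $\mathbf{1}_{n_i}$ \emph{spans} the kernel of $\mathcal{L}(\Gamma_i)$ is false when $\Gamma_i$ is disconnected --- which is exactly the situation in this paper, where every $\Gamma_i$ is an edgeless graph $\overline{K}_{\phi(n/k_i)}$ --- but the misstatement is harmless: your argument only uses that $\mathbf{1}_{n_i}$ \emph{lies in} the kernel, that $J_{n_i\times n_j}v=0$ whenever $v\perp\mathbf{1}_{n_j}$, and that $\mathcal{L}(\Gamma_i)$ admits an orthogonal eigenbasis containing $\mathbf{1}_{n_i}$; the convention that $\Phi_{L}(\Gamma_i)\setminus\{0\}$ removes exactly one copy of $0$ then handles disconnected $\Gamma_i$ correctly (the surviving zeros reappear as eigenvalues equal to $D_i$). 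Second, you have tacitly corrected two typos in the statement as printed: the correct increment is $D_i=\sum_{u_j\sim u_i}n_j$ (not $\sum_{u_j\sim u_i}n_i$), which is what your degree computation gives and what the paper itself uses later when it sets $D_{k_j}=\sum_{k_i\in N(k_j)}\phi(n/k_i)$; and the diagonal entries of $\mathbb{L}(\Gamma)$ are all of the form $D_i$ (the printed $p_{k_2},\dots,p_{k_d}$ are misprints). With these readings your proof establishes the theorem in the form in which the paper actually applies it.
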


The matrix $L(\Gamma)$ is called the vertex weighted Laplacian matrix of $\Gamma$, which is a zero row sum matrix but not a symmetric matrix in general.
Though the $k \times k$ matrix $\mathbb{L}(\Gamma)$ defined in Theorem \ref{laplacianspectrum theoremforjoin}, is a symmetric matrix but it need not be a zero row sum matrix. Since the matrices $\mathbb{L}(\Gamma)$  and $L(\Gamma)$ are similar, we have the following remark.
\begin{remark}\label{laplacianremark}
$\Phi(\mathbb{L}(\Gamma)) = \Phi(L(\Gamma))$.
\end{remark}
Let $\mathbb{Z}_n$ denotes the ring of integers modulo $n$ that is, $\mathbb{Z}_n = \{0, 1, \cdots, n-1\}$. The number of integers which are prime to $n$ and less than $n$ is denoted by \emph{Euler Totient function} $\phi(n)$. An integer $d$, where $1 < d < n$, is called a proper divisor of $n$ if $d|n$. If $d$ does not divide $n$ then we write it as $d \nmid n$. The number of all the divisors of $n$ is denoted by $\tau(n)$. The greatest common divisor of the two positive integers $a$ and $b$ is denoted by \text{gcd}$(a, b)$. The ideal generated by the element $a$ of $\mathbb{Z}_n$ is the set $\{xa \; : \; x \in \mathbb{Z}_n\}$ and it is denoted by $\langle a \rangle$. 


\section{Structure of the cozero-divisor graph $\Gamma'(\mathbb{Z}_n)$}
In this section, we discuss about the structure of the cozero-divisor graph $\Gamma'(\mathbb{Z}_n)$.  Let $d_1, d_2, \cdots, d_k$ be the proper divisors of $n$. For $1 \leq i \leq k$, consider the following sets
\begin{center}
    $\mathcal{A}_{d_{i}}$ = $\{ x \in \mathbb{Z}_{n}: \text{gcd}(x,n) = d_{i}\}$.
\end{center}

\begin{remark}\label{partition}
For $x, y \in \mathcal{A}_{d_{i}}$, we have $\langle x \rangle = \langle y \rangle = \langle d_i \rangle$. Further, note that the sets $\mathcal{A}_{d_{1}}, \mathcal{A}_{d_{2}}, \cdots, \mathcal{A}_{d_{k}}$ forms a partition of the vertex set of the graph $\Gamma'(\mathbb{Z}_n)$. Thus, $V(\Gamma'(\mathbb{Z}_n)) = \mathcal{A}_{d_{1}} \cup \mathcal{A}_{d_{2}} \cup \cdots \cup \mathcal{A}_{d_{k}}$. 
\end{remark}

The cardinality of each $\mathcal{A}_{d_i}$ is known in the following lemma.
\begin{lemma}\cite{young2015adjacency}\label{valueof partition}
$|\mathcal{A}_{d_{i}}| = \phi(\frac{n}{d_i})$ for $1 \leq i \leq k$.
\end{lemma}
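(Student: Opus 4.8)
The plan is to construct an explicit bijection between $\mathcal{A}_{k_i}$ and the set of residues in $\{0, 1, \dots, \frac{n}{k_i}-1\}$ that are coprime to $\frac{n}{k_i}$, whose cardinality is $\phi(\frac{n}{k_i})$ by the definition of the Euler totient function. Throughout, set $m = \frac{n}{k_i}$, which is a positive integer because $k_i$ is a divisor of $n$.

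First I would observe that any $x \in \mathcal{A}_{k_i}$ satisfies $\text{gcd}(x,n) = k_i$, which forces $k_i \mid x$. Since $0 \le x < n$ and $k_i \mid n$, we may write $x = k_i t$ uniquely with $0 \le t < m$. The assignment $x \mapsto t$ is therefore a well-defined map from $\mathcal{A}_{k_i}$ into $\{0,1,\dots,m-1\}$, and it is injective since $k_i t = k_i t'$ implies $t = t'$.

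The key step is the scaling identity for the greatest common divisor: writing $n = k_i m$, we have $\text{gcd}(x,n) = \text{gcd}(k_i t, k_i m) = k_i\,\text{gcd}(t, m)$. Hence $\text{gcd}(x,n) = k_i$ holds if and only if $\text{gcd}(t,m) = 1$. This shows the image of the map is exactly $\{t \in \{0,1,\dots,m-1\} : \text{gcd}(t,m)=1\}$, and conversely every such $t$ produces a valid $x = k_i t \in \mathcal{A}_{k_i}$; thus the map is a bijection onto this set. Since $k_i$ is a \emph{proper} divisor, $m \ge 2$, so $t=0$ is not coprime to $m$, and the residues in $\{0,\dots,m-1\}$ coprime to $m$ coincide with those in $\{1,\dots,m-1\}$, giving exactly $\phi(m) = \phi(\frac{n}{k_i})$ elements. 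Combining this count with the bijection yields $|\mathcal{A}_{k_i}| = \phi(\frac{n}{k_i})$.

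I do not anticipate a serious obstacle: the only points demanding care are the gcd scaling identity $\text{gcd}(ca,cb)=c\,\text{gcd}(a,b)$ and the bookkeeping of ranges ensuring that the residue $0$ is correctly excluded. The statement is, in essence, a restatement of the definition of $\phi$ transported across the divisibility structure of $\mathbb{Z}_n$.
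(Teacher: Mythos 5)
Your proof is correct, but there is nothing in the paper to compare it against: the paper states this lemma as a quoted result from Young \cite{young2015adjacency} and gives no proof of its own. What you have supplied is the standard self-contained argument that the citation points to. The division $x \mapsto t = x/k_i$ is well defined because $\gcd(x,n)=k_i$ forces $k_i \mid x$, the scaling identity $\gcd(k_i t, k_i m) = k_i \gcd(t,m)$ (with $m = n/k_i$) converts the condition $\gcd(x,n)=k_i$ exactly into $\gcd(t,m)=1$, and your range bookkeeping is sound: since $k_i$ is a \emph{proper} divisor, $m \ge 2$, so $t=0$ is automatically excluded and the image is precisely the $\phi\left(\frac{n}{k_i}\right)$ residues in $\{1,\dots,m-1\}$ coprime to $m$, matching the paper's convention that $\phi(m)$ counts integers less than $m$ and prime to $m$. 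The trade-off between the two treatments is the usual one: the paper's citation keeps the exposition short and defers to the literature, whereas your bijection makes the count self-contained and makes transparent that the lemma is just the definition of $\phi$ transported along the divisor $k_i$ --- which is essentially how the cited source establishes it as well.
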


\begin{lemma}\label{adjacenyofvertex}
Let $x \in \mathcal{A}_{d_{i}}$, $ y \in \mathcal{A}_{d_{j}}$, where $i, j \in \{1, 2, \cdots, \tau(n)-2\}$. Then $x \sim y$ in $\Gamma'(\mathbb{Z}_n)$ if and only if $d_i \nmid d_j$ and $d_j \nmid d_i$.
\end{lemma}
\begin{proof}
First note that in $\mathbb{Z}_n$, $x \in \langle y \rangle$ if and only if $y \mid x$. Let $x \in \mathcal{A}_{d_{i}}$ and $y \in \mathcal{A}_{d_{j}}$ be two distinct vertices of $\Gamma'(\mathbb{Z}_n)$. Suppose that $x \sim y$ in $\Gamma'(\mathbb{Z}_n)$. Then $x \notin \langle y \rangle$ and $y \notin \langle x \rangle$. If $d_i \mid d_j$, then $d_j \in \langle d_i \rangle = \langle x \rangle$. It follows that $\langle y \rangle = \langle d_j \rangle \subseteq \langle x \rangle$ and so $y \in \langle x \rangle$, which is not possible. Similarly, if $d_j \mid d_i$, then we get $x \in \langle y \rangle$, again a contradiction. Thus, neither $d_i \mid d_j$ nor $d_j \mid d_i$. Conversely, if $d_i \nmid d_j$ and $d_j \nmid d_i$ then we obtain $x \notin \langle y \rangle$ and $y \notin \langle x \rangle$. It follows that $x \sim y$. The result holds.
\end{proof}

 For distinct vertices $x, y$ of $\mathcal{A}_{d_{i}}$, by Remark \ref{partition}, clearly $x \in \langle y \rangle$ and $y \in \langle x \rangle$. It  follows that $x \nsim y$ in $\Gamma'(\mathbb{Z}_n)$. Using Lemma \ref{valueof partition}, we have the following corollary.  

\begin{corollary}\label{partitionofcozerodivisorgraphisomorphic}
The following statements hold:
\begin{itemize}
    \item[(i)] For $i \in \{1, 2, \cdots, \tau(n)-2\}$, the induced subgraph $\Gamma'(\mathcal{A}_{d_i})$ of $\Gamma'(\mathbb{Z}_n)$ is isomorphic to $\overline{K}_{\phi(\frac{n}{d_i})}$.
    \item [(ii)] For $i,j \in \{1, 2, \cdots, \tau(n)-2\}$ and $i \neq j$, a vertex of $\mathcal{A}_{d_i}$ is adjacent to either all or none of the vertices of $\mathcal{A}_{d_j}$. 
\end{itemize}
\end{corollary}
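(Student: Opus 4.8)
The plan is to read off both statements directly from Lemma \ref{adjacenyofvertex} together with Lemma \ref{valueof partition}, exploiting the fact that the adjacency criterion in Lemma \ref{adjacenyofvertex} is phrased entirely in terms of the indexing divisors $k_i, k_j$ and not in terms of the particular elements $x, y$. This ``class-determined'' nature of adjacency is the single observation that drives both parts.

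For part (1), I would fix $i$ and take two \emph{distinct} vertices $x, y \in \mathcal{A}_{k_i}$. Applying Lemma \ref{adjacenyofvertex} in the case $j = i$, adjacency $x \sim y$ would require both $k_i \nmid k_i$ and $k_i \nmid k_i$; but $k_i \mid k_i$ always holds, so this condition fails. Hence no two distinct vertices of $\mathcal{A}_{k_i}$ are adjacent, and the induced subgraph $\Gamma^{'}(\mathcal{A}_{k_i})$ has no edges. By Lemma \ref{valueof partition} it has exactly $\phi(\tfrac{n}{k_i})$ vertices, so it is the edgeless graph $\overline{K}_{\phi(n/k_i)}$, as claimed.

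For part (2), I would fix $i \neq j$ and a vertex $x \in \mathcal{A}_{k_i}$, and take arbitrary $y, y' \in \mathcal{A}_{k_j}$. By Lemma \ref{adjacenyofvertex}, whether $x \sim y$ holds is governed solely by the condition ``$k_i \nmid k_j$ and $k_j \nmid k_i$'', which does not reference $y$ at all; the identical condition governs $x \sim y'$. Therefore $x \sim y$ if and only if $x \sim y'$, so $x$ is joined to either every vertex of $\mathcal{A}_{k_j}$ or to none of them.

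Since the corollary is an immediate specialization of Lemma \ref{adjacenyofvertex}, there is no real technical obstacle to overcome; the only point requiring a moment of care is the conceptual one of recognizing that the adjacency rule depends only on the pair of divisors and not on the chosen representatives, which is exactly what makes the partition $\mathcal{A}_{k_1} \cup \cdots \cup \mathcal{A}_{k_d}$ equitable. I would also flag the small bookkeeping detail in part (1) that one must use \emph{distinct} representatives (so the induced subgraph is genuinely empty rather than merely loopless), which is automatic here since $\Gamma^{'}(\mathbb{Z}_n)$ is a simple graph.
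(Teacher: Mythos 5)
Your proposal is correct, and both parts are, as in the paper, immediate specializations of the preceding lemmas. The only divergence is in part (1): the paper does not specialize Lemma \ref{adjacenyofvertex} to the case $i=j$; instead it argues directly from the ring structure --- for distinct $x, y \in \mathcal{A}_{k_i}$ one has $\langle x \rangle = \langle y \rangle = \langle k_i \rangle$, hence $x \in Ry$ and $y \in Rx$, so $x \nsim y$ by the definition of the cozero-divisor graph --- and then counts vertices with Lemma \ref{valueof partition}, exactly as you do. Your route is equally valid, since Lemma \ref{adjacenyofvertex} as stated (and as proved) imposes no restriction $i \neq j$, only that $x$ and $y$ be distinct vertices; specializing it to $j = i$ gives the automatic failure of the condition $k_i \nmid k_i$ and hence non-adjacency. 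If anything, your version makes the logical dependence cleaner (everything flows from the single adjacency lemma), while the paper's version of part (1) is self-contained at the level of ideals and does not rely on reading the lemma's hypotheses carefully. For part (2) the two arguments coincide: the paper simply asserts that it follows from Lemma \ref{adjacenyofvertex}, and you spell out the one-line reason, namely that the adjacency criterion depends only on the pair of divisors and not on the chosen representatives. Neither approach buys anything substantive over the other.
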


Thus, the partition $\mathcal{A}_{d_{1}}, \mathcal{A}_{d_{2}}, \ldots,  \mathcal{A}_{d_{\tau(n) - 2}}$ of $V(\Gamma'(\mathbb{Z}_n))$ is an equitable partition in such a way that every vertex of the $\mathcal{A}_{d_{i}}$ has equal number of neighbors in $\mathcal{A}_{d_{j}}$ for every $i, j \in \{1, 2, \cdots, \tau(n)- 2\}$.

We define $\Upsilon'_n$ by the simple undirected graph whose vertex set is the set of all proper divisors $d_1, d_2, \cdots, d_k$ of $n$ and two distinct vertices $d_i$ and $d_j$ are adjacent if and only if $d_i \nmid d_j$ and $d_j \nmid d_i$.

\begin{lemma}\label{connectednessofcozero}
For a prime $p$, the graph $\Upsilon'_n$ is connected if and only if $n \neq p^t$, where $t \geq 3$.
\end{lemma}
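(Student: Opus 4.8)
The plan is to prove the contrapositive form of one direction, i.e.\ to show that $\Upsilon^{'}_n$ is \emph{disconnected} if and only if $n = p^t$ for some prime $p$ and some $t \geq 3$. For the easy direction, suppose $n = p^t$ with $t \geq 3$. The proper divisors of $n$ are then exactly $p, p^2, \ldots, p^{t-1}$, a set of $t-1 \geq 2$ vertices, and for any two of them, say $p^i$ and $p^j$ with $i < j$, we have $p^i \mid p^j$. By the adjacency rule of the definition of $\Upsilon^{'}_n$ (equivalently Lemma \ref{adjacenyofvertex}) no two such vertices are adjacent, so $\Upsilon^{'}_n$ has at least two vertices and no edges, hence is disconnected.

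For the converse, assume $n$ is not of the form $p^t$ with $t \geq 3$. If $n$ is $p$ or $p^2$ then $\Upsilon^{'}_n$ has at most one vertex and is trivially connected, so I may assume $n$ has at least two distinct prime divisors $p$ and $q$. Both $p$ and $q$ are proper divisors, and since $p \nmid q$ and $q \nmid p$ we get $p \sim q$; this edge serves as the core to which I will connect every vertex. The first observation is that if $p \nmid k$ for a proper divisor $k$, then also $k \nmid p$ (as $k \neq 1, p$), so $k \sim p$; symmetrically, if $q \nmid k$ then $k \sim q$. Consequently the only vertices not directly adjacent to the core are those $k$ with $pq \mid k$.

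The crux is to connect such a $k$ to the core. Since $k$ is a proper divisor, $k < n$, so $n/k > 1$ and there is a prime $r \mid n$ with $v_r(k) < v_r(n)$, where $v_r$ denotes the $r$-adic valuation. Put $s = r^{v_r(n)}$; this is a proper divisor because $n$, not being a prime power, has a prime factor other than $r$. I claim $k \sim s$: indeed $s \nmid k$ since $v_r(k) < v_r(n)$, and $k \nmid s$ since $k$ is divisible by the two distinct primes $p,q$ whereas $s$ is a power of the single prime $r$. Finally $s$, being a power of one prime $r$, is adjacent to whichever of $p,q$ differs from $r$, because a power of $r$ neither divides nor is divisible by a distinct prime. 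Thus $k \sim s$ and $s$ lies in the core's component, so $k$ does too. Combining the three cases shows every vertex is connected to the edge $p \sim q$, which proves connectedness.

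The step I expect to be the main obstacle is precisely the handling of divisors $k$ with $pq \mid k$: here direct adjacency to $p$ or $q$ fails, and the genuinely non-routine idea is to route through the maximal prime-power divisor $s = r^{v_r(n)}$, exploiting that such an $s$ can neither divide nor be divided by the non-prime-power $k$. The remainder is bookkeeping with the divisibility-based adjacency rule of Lemma \ref{adjacenyofvertex}, together with the (harmless) convention that a graph on at most one vertex counts as connected.
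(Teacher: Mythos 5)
Your proof is correct, and it takes a genuinely different route from the paper. The paper proves the nontrivial direction by induction on the prime factorization of $n$: the base case is $n=pq$ (where $\Upsilon^{'}_n = K_2$), and the inductive step passes from $n' = p_1^{l_1}\cdots p_m^{l_m}$ to $n = p_1^{l_1}\cdots p_m^{l_m+1}$ by exhibiting adjacencies between the newly created divisors and the old ones; as written, that inductive step is rather sketchy about which new vertices arise and how each attaches to the existing connected graph. Your argument is direct and non-inductive: you fix the edge $p \sim q$ as a core, observe that any proper divisor missing one of $p,q$ is adjacent to that prime (since a divisor $k>1$ with $p \nmid k$ cannot divide $p$), and handle the genuinely problematic vertices $k$ with $pq \mid k$ by routing through the maximal prime-power divisor $s = r^{v_r(n)}$ for a prime $r$ with $v_r(k) < v_r(n)$ --- the key points being that $s \nmid k$ by valuation, $k \nmid s$ since $k$ has two prime factors while $s$ has one, and $s$ is adjacent to whichever of $p,q$ differs from $r$. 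This buys you something the paper's proof does not make visible: every vertex lies within distance $2$ of the edge $p \sim q$, so you get an explicit small bound on the diameter of $\Upsilon^{'}_n$ for free, and you avoid the bookkeeping of tracking how the divisor set grows under the induction. Both proofs agree on the easy direction ($n = p^t$, $t \geq 3$, gives a divisor chain, hence an edgeless graph on $t-1 \geq 2$ vertices) and on the degenerate cases $n = p, p^2$.
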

\begin{proof}
Suppose that $\Upsilon'_n$ is a connected graph and $V(\Upsilon'_n) = \{d_1, d_2, \cdots, d_k\}$. If $n = p^t$ for $t \geq 3$, then $V(\Upsilon'_{p^t}) = \{p, p^2, \cdots, p^{t-1}\}$. The definition of $\Upsilon'_n$ gives that $\Upsilon'_{p^t}$ is a null graph on $t-1$ vertices. Thus, $\Upsilon'_n$ is not connected; a contradiction. Conversely, suppose that $n \neq p^t$, where $t \geq 3$. If $n = p^t$ for $t \in \{1, 2\}$, then there is nothing to prove because $\Upsilon'_p$ is an empty graph whereas $\Upsilon'_{p^2}$ is a graph with one vertex only. We may now suppose that $n = p_1^{n_1}p_2^{n_2}\cdots p_m^{n_m}$, where $p_i$'s are distinct primes and $m \geq 2$. Now let $d, d' \in V(\Upsilon'_n)$. If $d \nmid d'$ and $d' \nmid d$, then $d \sim d'$. Without loss of generality, assume that $d \mid d'$ with $d = p_1^{\beta_1}p_2^{\beta_2}\cdots p_m^{\beta_m}$ and $d' = p_1^{\alpha_1}p_2^{\alpha_2}\cdots p_m^{\alpha_m}$. Note that $\alpha_i, \beta_i \in \mathbb{N} \cup \{0\}$ such that $\beta_i \leq \alpha_i$. Since $d'$ is a proper divisor of $n$ there exists $r \in \{1, 2, \ldots, m\}$, where $\alpha_r \leq n_r$,  such that $p_r^{n_r} \nmid d'$ and $d' \nmid p_r^{n_r}$. Clearly, $p_r^{n_r} \nmid d$. If $d \nmid p_r^{n_r}$, then $d \sim p_r^{n_r} \sim d'$. If $d \mid p_r^{n_r}$, then there exists $s \in \{1, 2, \ldots, m\}\setminus \{r\}$ such that $d \nmid p_s$ and $p_s \nmid d$. Also, $p_r^{n_r} \nmid p_s$ and $p_s \nmid p_r^{n_r}$. It follows that $d' \sim p_r^{n_r} \sim p_s \sim d$. Hence, the graph $\Upsilon'_n$ is connected.
\end{proof}

\begin{lemma}\label{inducedsubgraphequaltogamma}
$\Gamma'(\mathbb{Z}_n) = \Upsilon'_n [\Gamma'(\mathcal{A}_{d_1}), \Gamma'(\mathcal{A}_{d_2}), \cdots, \Gamma'(\mathcal{A}_{d_k})],$
where $d_1, d_2, \cdots, d_k$ are all the proper divisors of $n$. 
\end{lemma}

\begin{proof}
Replace the vertex $d_i$ of $\Upsilon'_n$ by $\Gamma'(\mathcal{A}_{d_i})$ for $1 \leq i \leq k$. Consequently, the result can be obtained by using Lemma \ref{adjacenyofvertex}. 
\end{proof}

\begin{lemma}
For a prime $p$, we have $\Gamma'(\mathbb{Z}_n)$ is connected if and only if either $n = 4$ or $n \neq p^t$, where $t \geq 2$.
\end{lemma}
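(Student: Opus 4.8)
The plan is to exploit the generalized join decomposition $\Gamma^{'}(\mathbb{Z}_n) = \Upsilon^{'}_n[\Gamma^{'}(\mathcal{A}_{k_1}), \ldots, \Gamma^{'}(\mathcal{A}_{k_d})]$ furnished by Lemma \ref{inducedsubgraphequaltogamma}, together with the fact (Corollary \ref{partitionofcozerodivisorgraphisomorphic}(1)) that each factor $\Gamma^{'}(\mathcal{A}_{k_i}) \cong \overline{K}_{\phi(n/k_i)}$ is edgeless. The central observation is that, since the factors carry no internal edges, adjacency in $\Gamma^{'}(\mathbb{Z}_n)$ is the all-or-none relation of Corollary \ref{partitionofcozerodivisorgraphisomorphic}(2) between whole classes, controlled by whether $k_i \sim k_j$ in $\Upsilon^{'}_n$ via Lemma \ref{adjacenyofvertex}. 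Connectedness of the join is therefore governed almost entirely by $\Upsilon^{'}_n$.

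For the forward direction I would argue by contraposition. Suppose $n = p^t$ with $p$ an odd prime and $t \geq 2$. The proper divisors of $n$ are $p, p^2, \ldots, p^{t-1}$, which form a chain under divisibility; hence for every pair $i \neq j$ one of $p^i, p^j$ divides the other, so by Lemma \ref{adjacenyofvertex} no two vertices lying in distinct classes are adjacent. Combined with the edgelessness of each class, this shows $\Gamma^{'}(\mathbb{Z}_{p^t})$ has no edges at all. Using Lemma \ref{valueof partition} the total number of vertices is $\sum_{i=1}^{t-1}\phi(p^{t-i}) = p^{t-1}-1$, and for an odd prime $p$ with $t \geq 2$ this is at least $p - 1 \geq 2$. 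An edgeless graph on at least two vertices is disconnected, which gives the claim.

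For the converse, suppose $n$ is not such a prime power, so that $n$ has at least two distinct prime divisors. Then $\Upsilon^{'}_n$ is connected by Lemma \ref{connectednessofcozero}, and it has at least two vertices, since two distinct prime divisors of $n$ are incomparable proper divisors. I would then verify directly that the join is connected: two vertices lying in different classes $\mathcal{A}_{k_i}, \mathcal{A}_{k_j}$ are linked by lifting a path $k_i = k_{i_0} \sim \cdots \sim k_{i_r} = k_j$ in $\Upsilon^{'}_n$ to a path through representatives of the corresponding classes, each consecutive pair of classes being completely joined by Corollary \ref{partitionofcozerodivisorgraphisomorphic}(2); and two vertices $x, y$ in the same class $\mathcal{A}_{k_i}$ are linked via any vertex $z$ of a neighbouring class $\mathcal{A}_{k_j}$, which exists because $\Upsilon^{'}_n$ is connected with at least two vertices, so $k_i$ has a neighbour $k_j$ and $\mathcal{A}_{k_j}$ is nonempty.

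The main obstacle is the degenerate low-order behaviour created by the empty factors: connectedness of $\Gamma^{'}(\mathbb{Z}_n)$ does \emph{not} reduce to connectedness of $\Upsilon^{'}_n$ alone, because two vertices sharing one edgeless class can only be joined through a neighbouring class. One must therefore separately guarantee that $\Upsilon^{'}_n$ has at least two vertices, and this is precisely where the hypothesis that $p$ is odd becomes essential: it rules out $n = 4 = 2^2$, where $\Upsilon^{'}_n$ is a single vertex and $\Gamma^{'}(\mathbb{Z}_4) \cong \overline{K}_1$ is a lone (hence trivially connected) vertex, whereas for an odd prime $p$ the class $\mathcal{A}_p$ already contains $\phi(p) = p - 1 \geq 2$ vertices.
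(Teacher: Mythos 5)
Your proposal is correct and follows essentially the same route as the paper: the generalized join decomposition of Lemma \ref{inducedsubgraphequaltogamma}, the chain structure of the divisors of $p^t$ forcing $\Gamma^{'}(\mathbb{Z}_{p^t})$ to be edgeless on $p^{t-1}-1 \geq p-1 \geq 2$ vertices, and, for the converse, connectedness of $\Upsilon^{'}_n$ via Lemma \ref{connectednessofcozero}. If anything, your write-up is more careful than the paper's one-line citation in the converse (you make the path-lifting through nonempty, completely joined classes explicit, including the subtle point that two vertices in the same edgeless class must detour through a neighbouring class), while sharing the paper's mild looseness about the degenerate cases $n$ prime (empty vertex set) and $n$ a power of $2$, which both treatments leave outside the argument.
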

\begin{proof}
Suppose that $\Gamma'(\mathbb{Z}_n)$ is a connected graph and $n \ne 4$. If possible, let $n = p^t$ for $t \geq 2$ then note that $V(\Gamma'(\mathbb{Z}_n)) = \Gamma'(\mathcal{A}_{p}) \cup \Gamma'(\mathcal{A}_{p^2}) \cup \cdots \cup \Gamma'(\mathcal{A}_{p^{t-1}})$ and so $x \nsim y$ for any $x, y \in V(\Gamma'(\mathbb{Z}_n))$ (see Lemma \ref{adjacenyofvertex} and Corollary \ref{partitionofcozerodivisorgraphisomorphic}). Consequently, $\Gamma'(\mathbb{Z}_n)$ is a null graph; a contradiction. Thus, $n \neq p^t$, where $t \geq 2$. Converse follows by the proof of Lemma \ref{connectednessofcozero} and  Lemma \ref{inducedsubgraphequaltogamma}. 
\end{proof}

\begin{example}
The cozero-divisor graph $\Gamma'(\mathbb{Z}_{30})$ is shown in Figure 2. 
\begin{figure}[h!]
\centering
\includegraphics[width=0.4 \textwidth]{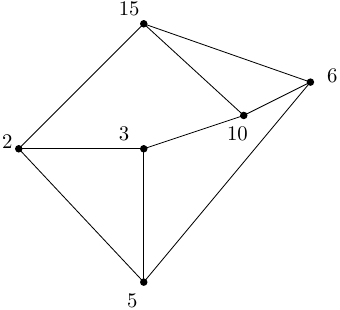}
\caption{The graph $\Upsilon'_{30}$}
\end{figure} \\
By Lemma \ref{inducedsubgraphequaltogamma}, note that  $\Gamma'(\mathbb{Z}_{30}) = \Upsilon'_{30} [\Gamma'(\mathcal{A}_{2}), \Gamma'(\mathcal{A}_{3}), \Gamma'(\mathcal{A}_{5}), \Gamma'(\mathcal{A}_{6}), \Gamma'(\mathcal{A}_{10}),  \Gamma'(\mathcal{A}_{15})]$, where $\Upsilon'_{30}$ is shown in Figure 1 and  $\Gamma'(\mathcal{A}_{2}) = \overline{K}_8$, $\Gamma'(\mathcal{A}_{3}) = \overline{K}_4 = \Gamma'(\mathcal{A}_{6})$, $\Gamma'(\mathcal{A}_{5}) = \overline{K}_2 = \Gamma'(\mathcal{A}_{10})$,  $\Gamma'(\mathcal{A}_{15}) = \overline{K}_1$. 
\begin{figure}[h!]
\centering
\includegraphics[width=0.4 \textwidth]{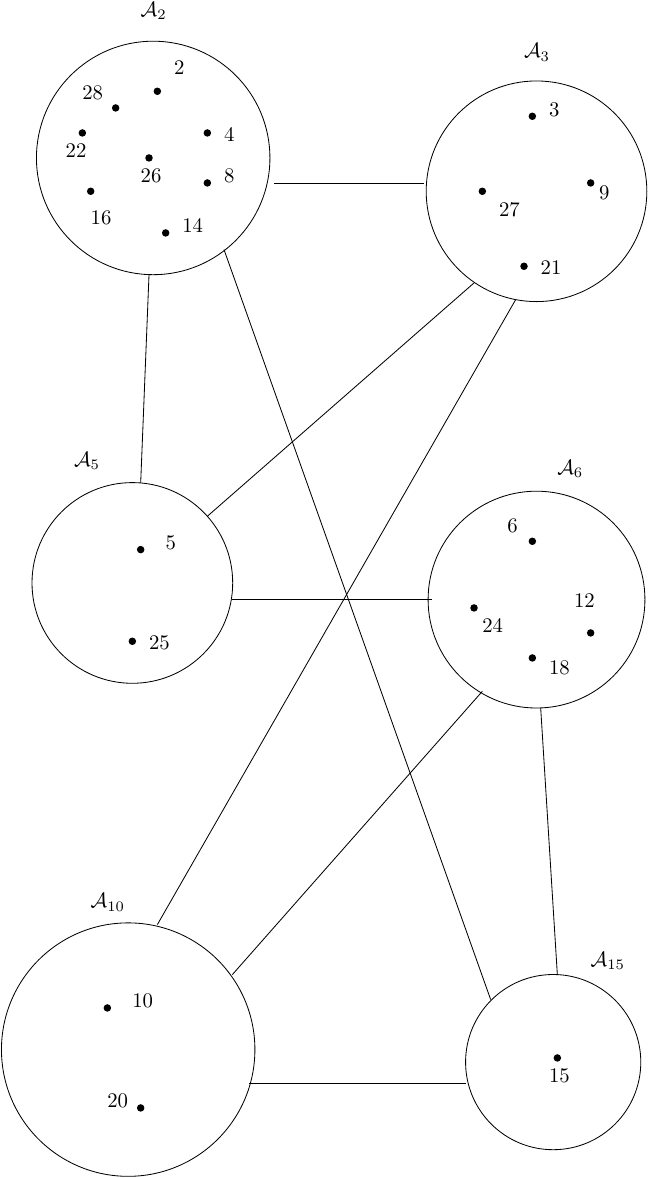}
\caption{The graph $\Gamma'(\mathbb{Z}_{30})$}
\end{figure}
\end{example}

\section{Laplacian spectrum of $\Gamma'(\mathbb{Z}_n)$}
In this section, we investigate the Laplacian spectrum of the  $\Gamma'(\mathbb{Z}_n)$ for various $n$. Consider $d_1, d_2, \cdots,d_k$ as all the proper divisors of $n$. For $1 \leq i \leq k$, we give the weight $\phi(\frac{n}{d_i}) = |\mathcal{A}_{d_{i}}|$ to the vertex $d_i$ of the graph $\Upsilon'_n$. Define the integer
\begin{center}
    $D_{d_j} =  \sum \limits_{{d_i} \in N_{\Upsilon'_n}(d_j)} \phi(\frac{n}{d_i})$
\end{center}
The $k \times k$ weighted Laplacian matrix $L(\Upsilon'_{n})$ of $\Upsilon'_{n}$ defined in Theorem \ref{laplacianspectrum theoremforjoin} is given by 
\begin{equation}
L(\Upsilon'_{n})  = \displaystyle \begin{bmatrix}\label{spectrum matrix2}
	D_{d_1}&  -l_{1,2} & \cdots & -l_{1,k}  \\
	-l_{2,1}& D_{d_2}   &\cdots&  -l_{2,k} \\ 
 \cdots & \cdots & \cdots & \cdots \\
 -l_{k,1} & -l_{k,2} & \cdots & D_{d_k}
	\end{bmatrix}
\end{equation}
where \[ l_{i,j} = \begin{cases} 
      \phi(\frac{n}{d_j}) & ~~\text{if}~~ d_i \sim d_j~~ \text{in}~~ \Upsilon'_{n};\\
      0 & \rm{otherwise.} 
    \end{cases}
\]

\begin{theorem}\label{laplacianeigenvaluescozero}
The Laplacian spectrum of $\Gamma'(\mathbb{Z}_n)$ is given by \begin{center}
    $\Phi_{L}(\Gamma'(\mathbb{Z}_n)) = \bigcup\limits_{i=1}^{k} (D_{d_i} + (\Phi_{L}(\Gamma'(\mathcal{A}_{d_i})) \setminus \{0\})) \bigcup \Phi(L(\Upsilon'_{n}))$,
\end{center}
where $D_{d_i} + (\Phi_{L}(\Gamma'(\mathcal{A}_{d_i})) \setminus \{0\})$ represents that $D_{d_i}$ is added to each element of the multiset $(\Phi_{L}(\Gamma'(\mathcal{A}_{d_i})) \setminus \{0\})$.
\end{theorem}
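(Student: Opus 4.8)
The plan is to read the asserted formula as the specialization of the general join-spectrum result in Theorem \ref{laplacianspectrum theoremforjoin} to the particular generalized join decomposition of $\Gamma^{'}(\mathbb{Z}_n)$ furnished by Lemma \ref{inducedsubgraphequaltogamma}; the proof is thus essentially a matter of matching notation rather than fresh computation. First I would invoke Lemma \ref{inducedsubgraphequaltogamma} to write $\Gamma^{'}(\mathbb{Z}_n) = \Upsilon^{'}_n[\Gamma^{'}(\mathcal{A}_{k_1}), \ldots, \Gamma^{'}(\mathcal{A}_{k_d})]$, so that the base graph is $\Upsilon^{'}_n$ on the $d$ proper divisors $k_1, \ldots, k_d$ of $n$ and the $i$-th constituent is the induced subgraph $\Gamma^{'}(\mathcal{A}_{k_i})$, which by Lemma \ref{valueof partition} has $n_i = |\mathcal{A}_{k_i}| = \phi(\frac{n}{k_i})$ vertices---exactly the vertex weight assigned to $k_i$ in Section 4. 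The decomposition is legitimate precisely because Corollary \ref{partitionofcozerodivisorgraphisomorphic} shows the partition $\{\mathcal{A}_{k_i}\}$ is equitable in the required sense: a vertex of $\mathcal{A}_{k_i}$ is joined to either all or none of the vertices of $\mathcal{A}_{k_j}$, which is the adjacency pattern defining a generalized join.

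Next I would apply Theorem \ref{laplacianspectrum theoremforjoin} verbatim with $\Gamma = \Upsilon^{'}_n$ and $\Gamma_i = \Gamma^{'}(\mathcal{A}_{k_i})$, which produces
\[ \Phi_{L}(\Gamma^{'}(\mathbb{Z}_n)) = \bigcup\limits_{i=1}^{d} (D_{k_i} + (\Phi_{L}(\Gamma^{'}(\mathcal{A}_{k_i})) \setminus \{0\})) \bigcup \Phi(\mathbb{L}(\Upsilon^{'}_n)). \]
At this stage I must check that the offset $D_i$ of the general theorem agrees with the integer $D_{k_i}$ fixed in Section 4. Since that offset is the sum of the weights $n_j = \phi(\frac{n}{k_j})$ over the neighbors $k_j$ of $k_i$, and adjacency in $\Upsilon^{'}_n$ is by definition the non-divisibility relation of Lemma \ref{adjacenyofvertex}, this sum coincides with $D_{k_i} = \sum_{k_j \in N_{\Upsilon^{'}_n}(k_i)} \phi(\frac{n}{k_j})$.

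Finally I would appeal to Remark \ref{laplacianremark}, which records that the symmetric matrix $\mathbb{L}(\Upsilon^{'}_n)$ of Theorem \ref{laplacianspectrum theoremforjoin} is similar to the vertex-weighted Laplacian $L(\Upsilon^{'}_n)$ of \eqref{spectrum matrix2}, so that $\Phi(\mathbb{L}(\Upsilon^{'}_n)) = \Phi(L(\Upsilon^{'}_n))$. Substituting this equality into the displayed formula yields precisely the claimed expression. The one place requiring care---and the only real obstacle here---is confirming that the abstract weights, offsets, and adjacency of Theorem \ref{laplacianspectrum theoremforjoin} match the concrete divisor data $(\Upsilon^{'}_n, \{\Gamma^{'}(\mathcal{A}_{k_i})\}, \{\phi(\frac{n}{k_i})\})$; once this dictionary is set up, the conclusion is immediate.
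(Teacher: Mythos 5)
Your proposal is correct and follows exactly the paper's own argument: the paper likewise proves the theorem by combining Lemma \ref{inducedsubgraphequaltogamma} (the generalized join decomposition), Theorem \ref{laplacianspectrum theoremforjoin}, and Remark \ref{laplacianremark}. Your write-up simply makes explicit the notational dictionary (weights $\phi(\frac{n}{k_i})$, offsets $D_{k_i}$, and the equitable-partition justification via Corollary \ref{partitionofcozerodivisorgraphisomorphic}) that the paper leaves implicit.
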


\begin{proof}
By Lemma \ref{inducedsubgraphequaltogamma}, $\Gamma'(\mathbb{Z}_n) = \Upsilon'_n [\Gamma'(\mathcal{A}_{d_1}), \Gamma'(\mathcal{A}_{d_2}), \cdots, \Gamma'(\mathcal{A}_{d_k})]$. Consequently, by Theorem \ref{laplacianspectrum theoremforjoin} and Remark \ref{laplacianremark}, the result holds.
\end{proof}
If $n = p^t$, where $t >1$, then the graph $\Gamma'(\mathbb{Z}_n)$ is a null graph. Let $n \neq p^t$ for any $t \in \mathbb{N}$. Then  by Lemma \ref{connectednessofcozero}, $\Upsilon'_n$ is connected graph so that $D_{d_i} > 0$. By Theorem \ref{laplacianeigenvaluescozero}, out of $n - \phi(n) -1$ Laplacian eigenvalues of $\Gamma'(\mathbb{Z}_n)$ note that $n - \phi(n) -1-k$ eigenvalues are non-zero integers. The remaining $k$ Laplacian eigenvalues of $\Gamma'(\mathbb{Z}_n)$ are the roots of the characteristic equation of the matrix $L(\Upsilon'_{n})$ given in equation (\ref{spectrum matrix2}).

\begin{lemma}
Let $n = pq$ be a product of two distinct primes. Then the Laplacian spectrum of $\Gamma'(\mathbb{Z}_n)$ is given by
\[\displaystyle \begin{pmatrix}
0 & p+q-2 &  p-1 & q-1 \\
 1 & 1 & q-2 & p-2 \\
\end{pmatrix}.\]
\end{lemma}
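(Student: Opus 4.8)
The plan is to apply the generalized-join spectrum formula of Theorem~\ref{laplacianeigenvaluescozero} to the decomposition supplied by Lemma~\ref{inducedsubgraphequaltogamma}. First I would record the combinatorial data for $n=pq$. The only proper divisors of $pq$ are $p$ and $q$, so $d=2$, $k_1=p$, $k_2=q$, and by Lemma~\ref{valueof partition} the two blocks have sizes $|\mathcal{A}_p|=\phi(q)=q-1$ and $|\mathcal{A}_q|=\phi(p)=p-1$. Since $p,q$ are distinct primes, $p\nmid q$ and $q\nmid p$, so by Lemma~\ref{adjacenyofvertex} the quotient graph $\Upsilon^{'}_{pq}$ is a single edge $k_1\sim k_2$. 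Consequently $\Gamma^{'}(\mathbb{Z}_{pq})=\overline{K}_{q-1}\lor\overline{K}_{p-1}$, the complete bipartite graph on parts of sizes $q-1$ and $p-1$.

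Next I would assemble the three ingredients of the spectrum formula. By Corollary~\ref{partitionofcozerodivisorgraphisomorphic}(1) each induced subgraph is edgeless, so $\Phi_{L}(\Gamma^{'}(\mathcal{A}_p))$ consists of $q-1$ copies of $0$ and $\Phi_{L}(\Gamma^{'}(\mathcal{A}_q))$ of $p-1$ copies of $0$; deleting one $0$ from each multiset leaves $q-2$ and $p-2$ copies of $0$ respectively. Because the unique neighbour of $k_1=p$ is $k_2=q$ I get $D_{p}=\phi(n/q)=\phi(p)=p-1$, and symmetrically $D_{q}=\phi(n/p)=\phi(q)=q-1$. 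Adding $D_{k_i}$ to the surviving zeros therefore contributes $q-2$ eigenvalues equal to $p-1$ and $p-2$ eigenvalues equal to $q-1$.

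It remains to diagonalise the $2\times 2$ weighted Laplacian
\[
L(\Upsilon^{'}_{pq})=\begin{bmatrix} p-1 & -(p-1)\\ -(q-1) & q-1\end{bmatrix},
\]
whose off-diagonal entries are $l_{1,2}=\phi(n/q)=p-1$ and $l_{2,1}=\phi(n/p)=q-1$ by~(\ref{spectrum matrix2}). This matrix has zero row sums, so $0$ is an eigenvalue; its trace equals $p+q-2$ and its determinant is $(p-1)(q-1)-(p-1)(q-1)=0$, so the other eigenvalue is $p+q-2$. Taking the union of the three pieces according to Theorem~\ref{laplacianeigenvaluescozero} yields the eigenvalue $0$ once, $p+q-2$ once, $p-1$ with multiplicity $q-2$, and $q-1$ with multiplicity $p-2$, which is precisely the asserted spectrum.

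I do not expect a genuine obstacle here: once the structure $\overline{K}_{q-1}\lor\overline{K}_{p-1}$ is identified, everything reduces to a single $2\times 2$ determinant. As a sanity check the multiplicities sum to $1+1+(q-2)+(p-2)=p+q-2=|V(\Gamma^{'}(\mathbb{Z}_{pq}))|$, as they must. The only point requiring care is the bookkeeping of the ``$\setminus\{0\}$'' operation, namely stripping exactly one zero from each block before applying the shift by $D_{k_i}$.
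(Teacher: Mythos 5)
Your proof is correct and follows essentially the same route as the paper: decompose $\Gamma^{'}(\mathbb{Z}_{pq})$ as the generalized join $\Upsilon^{'}_{pq}[\overline{K}_{q-1},\overline{K}_{p-1}]$ over $\Upsilon^{'}_{pq}=K_2$ and apply Theorem \ref{laplacianeigenvaluescozero}, with the last two eigenvalues coming from the $2\times 2$ weighted Laplacian $L(\Upsilon^{'}_{pq})$. Your values $D_p=\phi(p)=p-1$ and $D_q=\phi(q)=q-1$ are in fact the correct ones (the paper's proof states $D_p=\phi(q)$, $D_q=\phi(p)$, a harmless transposition, since its displayed matrix and final spectrum agree with yours).
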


\begin{proof}
By Lemma \ref{inducedsubgraphequaltogamma}, we have $\Gamma'(\mathbb{Z}_{pq}) = \Upsilon'_{pq}[\Gamma'(\mathcal{A}_{p}), \Gamma'(\mathcal{A}_{q})]$, where $\Upsilon'_{pq} = K_2$, $\Gamma'(\mathcal{A}_{p}) = \overline{K}_{\phi(q)}$ and $\Gamma'(\mathcal{A}_{q}) = \overline{K}_{\phi(p)}$ (cf. Lemma \ref{valueof partition} and Corollary \ref{partitionofcozerodivisorgraphisomorphic}). Consequently, by Theorem \ref{laplacianeigenvaluescozero}, the Laplacian spectrum of $\Gamma'(\mathbb{Z}_{pq})$ is 

\begin{align*}
\Phi_{L}(\Gamma'(\mathbb{Z}_{pq})) &= (D_{p} + (\Phi_{L}(\Gamma'(\mathcal{A}_{p})) \setminus \{0\})) \bigcup (D_{q} + (\Phi_{L}(\Gamma'(\mathcal{A}_{q})) \setminus \{0\})) \bigcup \Phi(L(\Upsilon'_{pq}))\\
&= \displaystyle \begin{pmatrix}
 p-1 & q-1 \\
 q-2 & p-2 \\
\end{pmatrix} \bigcup \Phi(L(\Upsilon'_{pq})).
\end{align*}

Then the matrix \[L(\Upsilon'_{pq})  = \displaystyle \begin{bmatrix}
	p-1 &  -(p-1) \\
	-(q-1)& q-1   
\end{bmatrix}\]
 has eigenvalues $p+q-2$ and $0$. Thus, we have the result. 
\end{proof}

\begin{notation}
$(\lambda_{i})^{[\mu_{i}]}$ denotes the eigenvalue $\lambda_{i}$ of $\mathcal{L}({\Gamma'(\mathbb{Z}_n)})$ with multiplicity $\mu_{i}$. 
\end{notation}

\begin{lemma}
For distinct primes $p$ and $q$, if $n = p^2q$ then the Laplacian eigenvalues of $\Gamma'(\mathbb{Z}_n)$ consists of the set $\left\{(p^2-p)^{[(p-1)(q-1)-1]},(pq-p)^{[p^2-p-1]}, (p^2-1)^{[q-2]}, (q-1)^{[p-2]} \right\}$ and the remaining eigenvalues are the roots of 
the characteristic polynomial
\[x^4-\{(p-1)(2p+1)+(p+1)(q-1)\}x^3+\{p(p-1)^2(p+1)+(p-1)(p+1)^2(q-1)+p(q-1)^2+(p-1)^2(q-1)\}x^2\]
\[\hspace{-11cm}-p(p-1)(q-1)\{(p-1)(p+1)+p(q-1)\}x.\]
\end{lemma}
\begin{proof}
First note that $\Upsilon'_{p^2q}$ is the path graph given by $p \sim q \sim p^2 \sim pq$. By Lemma \ref{inducedsubgraphequaltogamma}, \[\Gamma'(\mathbb{Z}_{p^2q}) = \Upsilon'_{p^2q}[\Gamma'(\mathcal{A}_{p}), \Gamma'(\mathcal{A}_{q}), \Gamma'(\mathcal{A}_{p^2}), \Gamma'(\mathcal{A}_{pq}) ],\] where $\Gamma'(\mathcal{A}_{p}) = \overline{K}_{\phi(pq)}$,  $\Gamma'(\mathcal{A}_{q}) = \overline{K}_{\phi(p^2)}$, $\Gamma'(\mathcal{A}_{p^2}) = \overline{K}_{\phi(q)}$ and $\Gamma'(\mathcal{A}_{pq}) = \overline{K}_{\phi(p)}$. It follows that $D_p = \phi(p^2) = p^2-p$ and $D_q = \phi(pq) + \phi(q) = p(q-1)$, $D_{p^2} = \phi(p^2) + \phi(p) = p^2-1$ and $D_{pq} = \phi(q) = q-1$. Therefore, by Theorem \ref{laplacianeigenvaluescozero}, the Laplacian spectrum of $\Gamma'(\mathbb{Z}_{p^2q})$ is 

\begin{align*}
\Phi_{L}(\Gamma'(\mathbb{Z}_{p^2q})) &= (D_{p} + (\Phi_{L}(\Gamma'(\mathcal{A}_{p})) \setminus \{0\})) \bigcup (D_{q} + (\Phi_{L}(\Gamma'(\mathcal{A}_{q})) \setminus \{0\})) \bigcup (D_{p^2} + (\Phi_{L}(\Gamma'(\mathcal{A}_{p^2})) \setminus \{0\})) \\ 
& \bigcup (D_{pq} + (\Phi_{L}(\Gamma'(\mathcal{A}_{pq})) \setminus \{0\})) \bigcup \Phi(L(\Upsilon'_{p^2q}))\\
 &= \displaystyle \begin{pmatrix}
p^2-p & pq-p &  p^2-1 & q-1 \\
(p-1)(q-1)-1& p^2-p-1 & q-2 & p-2 \\
\end{pmatrix} \bigcup \Phi(L(\Upsilon'_{p^2q})).
\end{align*}
Thus, the remaining Laplacian eigenvalues can be obtained by the characteristic polynomial (given in the statement) of the matrix \[L(\Upsilon'_{p^2q})  = \displaystyle \begin{bmatrix}
	p^2-p &  -p^2+p & 0 & 0 \\
	-(p-1)(q-1) & p(q-1) & -(q-1) & 0\\
	0 & -p^2+p & p^2-1 & -p+1 \\
	0 & 0 & -q+1 & q-1 
\end{bmatrix}.\]
\end{proof}

\begin{lemma}
For distinct primes $p$ and $q$, if $n = p^{n_1}q$ then the Laplacian eigenvalues of $\Gamma'(\mathbb{Z}_n)$ consists of the set $\{ \left(\phi(p^{n_1})\right)^{[\phi(p^{n_1-1}q)-1]}, \left(\phi(p^{n_1}) + \phi(p^{n_1-1})\right)^{[\phi(p^{n_1-2}q)-1]}, \left(\sum \limits_{i=0}^{2} \phi(p^{n_1-i})\right)^{[\phi(p^{n_1-3}q) -1]}, \cdots , \left(\sum \limits_{i=0}^{n_1-1} \phi(p^{n_1-i})\right)^{[\phi(p^{n_1-n_1}q) -1]},\\ \left(\sum \limits_{i=1}^{n_1} \phi(p^{n_1-i}q)\right)^{[\phi(p^{n_1}) -1]}, \left(\sum \limits_{i=2}^{n_1} \phi(p^{n_1-i}q)\right)^{[\phi(p^{n_1-1}) -1]}, \cdots , \left(\phi(q)\right)^{[\phi(p)-1]} \}$. \\
and the remaining eigenvalues are the eigenvalues of the matrix given in equation \rm{(\ref{spectrum matrix2}}).
\end{lemma}

\begin{proof}
Note that $\{p, p^2, \cdots, p^{n_1}, q, pq, p^2q, \cdots, p^{n_1-1}q\}$ is the vertex set of the graph $\Upsilon'_{p^{n_1}q}$. By Lemma \ref{inducedsubgraphequaltogamma},
\[ \Gamma'(\mathbb{Z}_{p^{n_1}q}) = \Upsilon'_{p^{n_1}q}[\Gamma'(\mathcal{A}_{p}), \Gamma'(\mathcal{A}_{p^2}), \cdots, \Gamma'(\mathcal{A}_{p^{n_1}}), \Gamma'(\mathcal{A}_{q}), \Gamma'(\mathcal{A}_{pq}), \Gamma'(\mathcal{A}_{p^2q}), \cdots, \Gamma'(\mathcal{A}_{p^{n_1-1}q})],
\]
where, $\Gamma'(\mathcal{A}_{p}) = \overline{K}_{\phi(p^{n_1-1}q)}$, $\Gamma'(\mathcal{A}_{p^2}) = \overline{K}_{\phi(p^{n_1-2}q)}$, $\cdots$, $\Gamma'(\mathcal{A}_{p^{n_1}}) = \overline{K}_{\phi(q)}$, $\Gamma'(\mathcal{A}_{q}) = \overline{K}_{\phi(p^{n_1})}$, $\Gamma'(\mathcal{A}_{pq}) = \overline{K}_{\phi(p^{n_1-1})}$, $\cdots$, $\Gamma'(\mathcal{A}_{p^{n_1-1}q}) = \overline{K}_{\phi(p)}$. It follows that
$D_p = \phi(p^{n_1})$, $D_{p^2} = \phi(p^{n_1}) + \phi(p^{n_1-1})$, $\cdots$, $D_{p^{n_1}} = \sum \limits_{i=0}^{n_1-1}\phi(p^{n_1-i})$,  $D_q = \sum \limits_{i=1}^{n_1}\phi(p^{n_1-i}q)$, $D_{pq} = \sum \limits_{i=2}^{n_1}\phi(p^{n_1-i}q)$, $\cdots$, $D_{p^{n_1-1}q} = \phi(q)$.
Consequently, by Theorem \ref{laplacianeigenvaluescozero}, the Laplacian spectrum of $\Gamma'(\mathbb{Z}_{p^{n_1}q})$ is
\begin{align*}
\Phi_{L}(\Gamma'(\mathbb{Z}_{p^{n_1}q})) &= (D_{p} + (\Phi_{L}(\Gamma'(\mathcal{A}_{p})) \setminus \{0\})) \bigcup (D_{p^2} + (\Phi_{L}(\Gamma'(\mathcal{A}_{p^2})) \setminus \{0\})) \bigcup \cdots \bigcup (D_{p^{n_1}} + (\Phi_{L}(\Gamma'(\mathcal{A}_{p^{n_1}})) \setminus \{0\})) \\
& \ \bigcup (D_{q} + (\Phi_{L}(\Gamma'(\mathcal{A}_{q})) \setminus \{0\})) \bigcup (D_{pq} + (\Phi_{L}(\Gamma'(\mathcal{A}_{pq})) \setminus \{0\})) \bigcup \cdots \bigcup (D_{p^{n_1-1}q} + (\Phi_{L}(\Gamma'(\mathcal{A}_{p^{n_1-1}q})) \setminus \{0\})) \\
& \ \bigcup \Phi(L(\Upsilon'_{p^{n_1}q})). \\
&= \displaystyle \begin{pmatrix}
\phi(p^{n_1}) &  \cdots & \sum \limits_{i=0}^{n_1-1} \phi(p^{n_1-i}) & \sum \limits_{i=1}^{n_1} \phi(p^{n_1-i}q) & \sum \limits_{i=2}^{n_1} \phi(p^{n_1-i}q) & \cdots & \phi(q)\\
\phi(p^{n_1-1}q)-1 & \cdots & \phi(p^{n_1-n_1}q) -1 & \phi(p^{n_1}) -1 & \phi(p^{n_1}) -1 & \cdots & \phi(p)-1
\end{pmatrix} \bigcup \Phi(L(\Upsilon'_{p^{n_1}q})).
\end{align*}

Thus, the remaining $2n_1$ Laplacian eigenvalues are the eigenvalues of the matrix $L(\Upsilon'_{p^{n_1}q}) =  $
\[ \displaystyle \begin{bmatrix}
\phi(p^{n_1}) &  0 & 0 & \cdots & 0 & \cdots & \cdots & 0 & - \phi(p^{n_1})\\
0 &  \sum \limits_{i=0}^{1}\phi(p^{n_1-i}) & 0 & \cdots & \cdots & 0 & \cdots & -\phi(p^{n_1-1}) & - \phi(p^{n_1})\\
\vdots & \vdots &  \vdots & \vdots & \vdots & \vdots & \vdots & \vdots & \vdots \\
0 &  0 & 0 & \cdots & \sum \limits_{i=0}^{n_1-1}\phi(p^{n_1-i}) & - \phi(p) & - \phi(p^2) & \cdots & - \phi(p^{n_1})\\
0 & 0 & 0 & \cdots & -\phi(q) & \phi(q) & 0 &\cdots & 0\\
0 & \cdots& \cdots& -\phi(pq) & -\phi(q) & 0 & \phi(pq)+\phi(q) &  \cdots & 0\\
\vdots & \vdots &  \vdots & \vdots & \vdots & \vdots & \vdots & \vdots & \vdots \\
-\phi(p^{n_1-1}q) & -\phi(p^{n_1-2}q) & \cdots &\cdots &-\phi(q) & 0 & 0 & \cdots & \sum \limits_{i=1}^{n_1}\phi(p^{n_1-i}q)
\end{bmatrix}\]
where matrix $L(\Upsilon'_{p^{n_1}q})$ is obtained by indexing the rows and columns as $p, p^2, \cdots, p^{n_1},  p^{n_1-1}q, \cdots, pq, q$.
\end{proof}

\begin{theorem}
If $n = p^{n_1}q^{n_2}$, where $p$ and $q$ are distinct primes. Then the set of Laplacian eigenvalues of $\Gamma'(\mathbb{Z}_n)$ consists of 
\begin{center}
 $\left(\sum \limits_{i=1}^{n_2}\phi(p^{n_1}q^{n_2-i})\right)^{[\phi(p^{n_1-1}q^{n_2})-1]}, ~~~~~\left(\sum \limits_{i=1}^{n_2}\phi(p^{n_1}q^{n_2-i}) + \sum \limits_{i=1}^{n_2}\phi(p^{n_1-1}q^{n_2-i})\right)^{[\phi(p^{n_1-2}q^{n_2})-1]},$ \\
 
 $\left(\sum \limits_{i=1}^{n_2} \phi(p^{n_1}q^{n_2-i})+ \sum \limits_{i=1}^{n_2}\phi(p^{n_1-1}q^{n_2-i}) + \sum \limits_{i=1}^{n_2}\phi(p^{n_1-2}q^{n_2-i})\right)^{[\phi(p^{n_1-3}q^{n_2}) -1]},$ \\
 
 $\vdots$\\
 
 $\left(\sum \limits_{i=1}^{n_2} \phi(p^{n_1}q^{n_2-i})+ \sum \limits_{i=1}^{n_2}\phi(p^{n_1-1}q^{n_2-i}) + \cdots + \sum \limits_{i=1}^{n_2}\phi(pq^{n_2-i})\right)^{[\phi(q^{n_2}) -1]}, $\\
 
 $\left(\sum \limits_{i=1}^{n_1}\phi(p^{n_1-i}q^{n_2})\right)^{[\phi(p^{n_1}q^{n_2-1})-1]}, \left(\sum \limits_{i=1}^{n_1}\phi(p^{n_1-i}q^{n_2}) + \sum \limits_{i=1}^{n_1}\phi(p^{n_1-i}q^{n_2-1})\right)^{[\phi(p^{n_1}q^{n_2-2})-1]},$\\ 
 
 $\left(\sum \limits_{i=1}^{n_1} \phi(p^{n_1-i}q^{n_2})+ \sum \limits_{i=1}^{n_1}\phi(p^{n_1-i}q^{n_2-1}) + \sum \limits_{i=1}^{n_1}\phi(p^{n_1-i}q^{n_2-2})\right)^{[\phi(p^{n_1}q^{n_2-3}) -1]}, $\\
 
 $\vdots$\\
 
 $\left(\sum \limits_{i=1}^{n_1} \phi(p^{n_1-i}q^{n_2})+ \sum \limits_{i=1}^{n_1}\phi(p^{n_1-i}q^{n_2-1}) + \cdots + \sum \limits_{i=1}^{n_1}\phi(p^{n_1-i}q)\right)^{[\phi(p^{n_1}) -1]},$\\
 
 $\left(\sum \limits_{i=2}^{n_1}\phi(p^{n_1-i}q^{n_2})+ \sum \limits_{i=2}^{n_2}\phi(p^{n_1}q^{n_2-i})\right)^{[\phi(p^{n_1-1}q^{n_2-1})-1]},$\\
 
 $\left(\sum \limits_{i=3}^{n_1}\phi(p^{n_1-i}q^{n_2})+ \sum \limits_{i=2}^{n_2}\phi(p^{n_1}q^{n_2-i}) + \sum \limits_{i=1}^{n_2}\phi(p^{n_1-1}q^{n_2-i})\right)^{[\phi(p^{n_1-2}q^{n_2-1})-1]},$\\
 
 $\vdots$\\
 
  $\left(\sum \limits_{i=2}^{n_2}\phi(p^{n_1}q^{n_2-i}) + \sum \limits_{i=1}^{n_1}\phi(p^{n_1-i}q^{n_2-2}) + \sum \limits_{i=1}^{n_1}\phi(p^{n_1-i}q^{n_2-3})+ \cdots + \sum \limits_{i=1}^{n_1-1}\phi(p^{n_1-i})\right) ^{[\phi(q^{n_2-1})-1]},$\\
  
  $\left(\sum \limits_{i=2}^{n_1}\phi(p^{n_1-i}q^{n_2})+ \sum \limits_{i=3}^{n_2}\phi(p^{n_1}q^{n_2-i}) + \sum \limits_{i=2}^{n_1}\phi(p^{n_1-i}q^{n_2-1})\right)^{[\phi(p^{n_1-1}q^{n_2-2})-1]},$\\
  
  $\vdots$\\
  
  
 
 $\left(\sum \limits_{i=2}^{n_1}\phi(p^{n_1-i}q^{n_2})+ \sum \limits_{i=1}^{n_2}\phi(p^{n_1-2}q^{n_2-i}) + \sum \limits_{i=1}^{n_2}\phi(p^{n_1-3}q^{n_2-i}) + \cdots + \sum \limits_{i=1}^{n_2-1}\phi(q^{n_2-i})\right)^{[\phi(p^{n_1-1})-1]},$\\
 
 $\vdots$\\
 
 $\left(\sum \limits_{i=1}^{n_2-1}\phi(q^{n_2-i}) +\phi(q^{n_2})\right)^{[\phi({p})-1]}$      
\end{center}
and the remaining $(n_1+1)(n_2+1)-2$ eigenvalues are given by the zeros of the characteristic polynomial of the matrix given in equation \rm{(\ref{spectrum matrix2}}). 
\end{theorem}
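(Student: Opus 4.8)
The plan is to treat $n=p^{n_1}q^{n_2}$ exactly as the preceding lemmas treated $pq$, $p^2q$ and $p^nq$, but with both exponents free, so that the whole statement becomes a bookkeeping computation layered on top of Theorem~\ref{laplacianeigenvaluescozero}. First I would list the vertices of $\Upsilon^{'}_{n}$: the proper divisors of $n$ are exactly the numbers $p^aq^b$ with $0\le a\le n_1$ and $0\le b\le n_2$, excluding $(a,b)=(0,0)$ (which gives $1$) and $(a,b)=(n_1,n_2)$ (which gives $n$); hence there are $d=(n_1+1)(n_2+1)-2$ of them. By Lemma~\ref{inducedsubgraphequaltogamma}, $\Gamma^{'}(\mathbb{Z}_n)=\Upsilon^{'}_{n}[\Gamma^{'}(\mathcal{A}_{k_1}),\ldots,\Gamma^{'}(\mathcal{A}_{k_d})]$, and by Corollary~\ref{partitionofcozerodivisorgraphisomorphic} together with Lemma~\ref{valueof partition} each cell is $\Gamma^{'}(\mathcal{A}_{p^aq^b})\cong\overline{K}_{\phi(p^{n_1-a}q^{n_2-b})}$.

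Second, I would feed this into Theorem~\ref{laplacianeigenvaluescozero}. Since the Laplacian spectrum of $\overline{K}_m$ is $0$ with multiplicity $m$, the set $\Phi_{L}(\Gamma^{'}(\mathcal{A}_{k_i}))\setminus\{0\}$ is just $0$ repeated $\phi(n/k_i)-1$ times; therefore each cell contributes the single eigenvalue $D_{k_i}$ with multiplicity $\phi(n/k_i)-1$, and the remaining $d$ eigenvalues are those of the weighted matrix $L(\Upsilon^{'}_{n})$ of equation~\ref{spectrum matrix2}. This already isolates the only real work: computing $D_{k_i}=\sum_{k_j\sim k_i}\phi(n/k_j)$ for every proper divisor $k_i$.

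Third, the computational heart, I would rewrite adjacency combinatorially. By Lemma~\ref{adjacenyofvertex}, $p^cq^d\sim p^aq^b$ iff neither divides the other, i.e. iff $(c,d)$ and $(a,b)$ are incomparable in the coordinatewise order, which holds precisely when $c>a,\,d<b$ or $c<a,\,d>b$. Thus the neighbours of $p^aq^b$ split into two rectangular blocks of exponent pairs, and since $\phi(n/(p^cq^d))=\phi(p^{n_1-c}q^{n_2-d})$, the quantity $D_{p^aq^b}$ becomes a sum of two double sums over these blocks. For a pure power $p^a$ (the case $b=0$) only the block $0\le c\le a-1,\ 1\le d\le n_2$ survives, for $q^b$ (the case $a=0$) only $1\le c\le n_1,\ 0\le d\le b-1$ survives, and for a mixed divisor both blocks appear; carrying out these sums and reindexing reproduces, line by line, the eigenvalues and the multiplicities $\phi(\cdot)-1$ recorded in the statement.

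Finally I would close with a dimension count: because $\sum_i\phi(n/k_i)=\sum_i|\mathcal{A}_{k_i}|=|V(\Gamma^{'}(\mathbb{Z}_n))|=n-\phi(n)-1$, the cell contributions account for $\sum_i(\phi(n/k_i)-1)=(n-\phi(n)-1)-d$ eigenvalues, and adjoining the $d$ zeros of the characteristic polynomial of $L(\Upsilon^{'}_{n})$ exhausts all $n-\phi(n)-1$ Laplacian eigenvalues; connectedness of $\Upsilon^{'}_{n}$ (Lemma~\ref{connectednessofcozero}) guarantees every $D_{k_i}>0$. I expect the main obstacle to be organizational rather than conceptual: keeping the two-dimensional exponent indexing straight so that the two block sums land on exactly the truncated summation ranges written in the statement, and checking that the boundary families ($a$ or $b$ equal to $1$, $n_1$ or $n_2$, and the pure-versus-mixed split) collapse to the stated expressions.
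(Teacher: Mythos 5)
Your proposal is correct and follows essentially the same route as the paper's own proof: decompose $\Gamma^{'}(\mathbb{Z}_n)$ as the generalized join $\Upsilon^{'}_{n}[\overline{K}_{\phi(n/k_1)},\ldots,\overline{K}_{\phi(n/k_d)}]$ via Lemmas \ref{valueof partition}, \ref{adjacenyofvertex}, \ref{inducedsubgraphequaltogamma} and Corollary \ref{partitionofcozerodivisorgraphisomorphic}, then apply Theorem \ref{laplacianeigenvaluescozero} so that each cell contributes the eigenvalue $D_{k_i}$ with multiplicity $\phi(n/k_i)-1$ while the remaining $(n_1+1)(n_2+1)-2$ eigenvalues come from the weighted matrix $L(\Upsilon^{'}_{n})$ of equation \ref{spectrum matrix2}. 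Your reformulation of adjacency as incomparability of exponent pairs, with the neighbour sets of $p^aq^b$ organized into two rectangular blocks, is merely a cleaner bookkeeping device for the same divisor-by-divisor computation of the $D$-values that the paper writes out explicitly.
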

\begin{proof}
The set of  proper divisors of $n = p^{n_1}q^{n_2}$ is
\begin{center}
$\{p, p^2, \cdots, p^{n_1}, q, q^2, \cdots, q^{n_2}, pq, p^2q, \cdots,$ $p^{n_1}q, pq^2, p^2q^2, \cdots, p^{n_1}q^2,$ $\cdots, pq^{n_2}, p^2q^{n_2}, \cdots, p^{n_1-1}q^{n_2}\}$.
\end{center}
By the definition of $\Upsilon'_n$, note that
\begin{itemize}
\item $p^{i} \sim q^{j}$ for all $i, j$.

\item $p^{i} \sim p^{i_1}q^{j_1}$ for $i > i_1$ and $j_1 > 0$.

\item $q^{j} \sim p^{i}q^{j_1}$ for  $j > j_1$ and $i > 0$.

\item If either  $i_1 > i_2$, $j_1 < j_2$ or $j_1 > j_2$, $i_1 < i_2$, then $p^{i_1}q^{j_1} \sim p^{i_2}q^{j_2}$ .
\end{itemize}
In view of Lemma \ref{inducedsubgraphequaltogamma},
\[
\Gamma'(\mathbb{Z}_{p^{n_1}q^{n_2}}) = \Upsilon'_{p^{n_1}q^{n_2}}[\Gamma'(\mathcal{A}_{p}), \Gamma'(\mathcal{A}_{p^2}), \cdots, \Gamma'(\mathcal{A}_{p^{n_1}}), \Gamma'(\mathcal{A}_{q}), \Gamma'(\mathcal{A}_{q^2}), \cdots, \Gamma'(\mathcal{A}_{q^{n_2}}),\Gamma'(\mathcal{A}_{pq}), \Gamma'(\mathcal{A}_{p^2q}), \cdots, \Gamma'(\mathcal{A}_{p^{n_1}q}),\] \[\cdots, \Gamma'(\mathcal{A}_{pq^{n_2}}),\cdots, \Gamma'(\mathcal{A}_{p^{n_1-1}q^{n_2}})].\]

Therefore, by Lemma \ref{valueof partition} and Corollary \ref{partitionofcozerodivisorgraphisomorphic}, we get 
\begin{center}
$\Gamma'(\mathcal{A}_{p^{i}}) = \overline{K}_{\phi(p^{n_1-i}q^{n_2})}$, where $1 \leq i \leq n_1$,\\
\vspace{0.3cm}
$\Gamma'(\mathcal{A}_{q^j}) = \overline{K}_{\phi(p^{n_1}q^{n_2-j})}$ where $1 \leq j \leq n_2$,\\
\vspace{0.3cm}
$\Gamma'(\mathcal{A}_{p^{i}q^{j}}) = \overline{K}_{\phi(p^{n_1-i}q^{n_2-j})}$.
\end{center}

 Consequently, we have
\begin{center}
$D_ p = \sum \limits_{i=1}^{n_2}\phi(p^{n_1}q^{n_2-i})$,
$D_{p^2} = \sum \limits_{i=1}^{n_2}\phi(p^{n_1}q^{n_2-i}) + \sum \limits_{i=1}^{n_2}\phi(p^{n_1-1}q^{n_2-i})$, \\

$\vdots$\\

$D_{p^{n_1}} = \sum \limits_{i=1}^{n_2} \phi(p^{n_1}q^{n_2-i})+ \sum \limits_{i=1}^{n_2}\phi(p^{n_1-1}q^{n_2-i}) + \cdots + \sum \limits_{i=1}^{n_2}\phi(pq^{n_2-i})$, \\

$D_q = \sum \limits_{i=1}^{n_1}\phi(p^{n_1-i}q^{n_2})$,\\

$\vdots$\\

$D_{q^{n_2}} = \sum \limits_{i=1}^{n_1} \phi(p^{n_1-i}q^{n_2})+ \sum \limits_{i=1}^{n_1}\phi(p^{n_1-i}q^{n_2-1}) + \cdots + \sum \limits_{i=1}^{n_1}\phi(p^{n_1-i}q)$, $D_{pq} = \sum \limits_{i=2}^{n_1}\phi(p^{n_1-i}q^{n_2})+ \sum \limits_{i=2}^{n_2}\phi(p^{n_1}q^{n_2-i})$, \\

$\vdots$\\

$D_{p^{n_1}q} = \sum \limits_{i=2}^{n_2}\phi(p^{n_1}q^{n_2-i}) + \sum \limits_{i=1}^{n_1}\phi(p^{n_1-i}q^{n_2-2}) + \sum \limits_{i=1}^{n_1}\phi(p^{n_1-i}q^{n_2-3})+ \cdots + \sum \limits_{i=1}^{n_1-1}\phi(p^{n_1-i})$, $D_{pq^2} = \sum \limits_{i=2}^{n_1}\phi(p^{n_1-i}q^{n_2})+ \sum \limits_{i=3}^{n_2}\phi(p^{n_1}q^{n_2-i}) + \sum \limits_{i=2}^{n_1}\phi(p^{n_1-i}q^{n_2-1})$,\\

$\vdots$\\

$D_{pq^{n_2}} = \sum \limits_{i=2}^{n_1}\phi(p^{n_1-i}q^{n_2})+ \sum \limits_{i=1}^{n_2}\phi(p^{n_1-2}q^{n_2-i}) + \sum \limits_{i=1}^{n_2}\phi(p^{n_1-3}q^{n_2-i}) + \cdots + \sum \limits_{i=1}^{n_2-1}\phi(q^{n_2-i})$,\\

$\vdots$\\

$D_{p^{n_1-1}q^{n_2}} = \sum \limits_{i=1}^{n_2-1}\phi(q^{n_2-i}) +\phi(q^{n_2})$. 
\end{center}
Therefore, by Theorem \ref{laplacianeigenvaluescozero}, the  Laplacian spectrum of  $\Gamma'(\mathbb{Z}_{p^{n_1}q^{n_2}})$ is
\begin{align*}
\Phi_{L}(\Gamma'(\mathbb{Z}_{p^{n_1}q^{n_2}})) & = (D_{p} + (\Phi_{L}(\Gamma'(\mathcal{A}_{p})) \setminus \{0\})) \bigcup (D_{p^2} + (\Phi_{L}(\Gamma'(\mathcal{A}_{p^2})) \setminus \{0\})) \bigcup \cdots \bigcup (D_{p^{n_1}} + (\Phi_{L}(\Gamma'(\mathcal{A}_{p^{n_1}})) \setminus \{0\}))\\
 & \bigcup (D_{q} + (\Phi_{L}(\Gamma'(\mathcal{A}_{q})) \setminus \{0\})) \bigcup (D_{q^2} + (\Phi_{L}(\Gamma'(\mathcal{A}_{q^2})) \setminus \{0\})) \bigcup \cdots \bigcup (D_{q^{n_2}} + (\Phi_{L}(\Gamma'(\mathcal{A}_{q^{n_2}})) \setminus \{0\})) \\
 & \bigcup (D_{pq} + (\Phi_{L}(\Gamma'(\mathcal{A}_{pq})) \setminus \{0\})) \bigcup \cdots \bigcup (D_{p^{n_1}q} + (\Phi_{L}(\Gamma'(\mathcal{A}_{p^{n_1}q})) \setminus \{0\})) \bigcup \cdots\\ & \bigcup  (D_{pq^{n_2}} + (\Phi_{L}(\Gamma'(\mathcal{A}_{pq^{n_2}})) \setminus \{0\})) 
  \bigcup \cdots \bigcup (D_{p^{n_1-1}q^{n_2}} + (\Phi_{L}(\Gamma'(\mathcal{A}_{p^{n_1-1}q^{n_2}})) \setminus \{0\})) \bigcup \Phi(L(\Upsilon'_{p^{n_1}q^{n_2}})).
\end{align*}
The remaining $(n_1+1)(n_2+1)-2$ eigenvalues are the zeros of the characteristic polynomial of the matrix $L(\Upsilon'_{p^{n_1}q^{n_2}})$ given in equation (\ref{spectrum matrix2}).
\end{proof}

\section{The Laplacian spectral radius and the Algebraic connectivity of $\Gamma'(\mathbb{Z}_n)$}
In this section, we study the algebraic connectivity and the Laplacian spectral radius of $\Gamma'(\mathbb{Z}_n)$. We obtain all those values of $n$ for which the Laplacian spectral radius of $\Gamma'(\mathbb{Z}_n)$ is equal to order of $\Gamma'(\mathbb{Z}_n)$. Moreover, the values of $n$ for which the algebraic connectivity and the vertex connectivity coincide are also described. The following theorem follows from the relation $\lambda(\Gamma) = |V(\Gamma)| - \mu(\overline{\Gamma})$ and the fact $\overline{\Gamma}$ is disconnected if and only if $\Gamma$ is the join of two graphs.

\begin{theorem}[\cite{fiedler1973algebraic}]\label{Spectralradius} 
If $\Gamma$ is a graph on m vertices, then $\lambda(\Gamma) \le m $. Further, equality holds if and only if $\overline{\Gamma}$ is disconnected if and only if $\Gamma$ is the join of two graphs.
\end{theorem}

In view of Theorem \ref{Spectralradius}, first we characterize the values of $n$ for which the complement of $\Gamma'(\mathbb{Z}_n)$ is disconnected.

\begin{proposition}\label{connectednessofcomplement}
$\overline{\Gamma'(\mathbb{Z}_n)}$ is disconnected if and only if $n$ is a product of two distinct primes.
\end{proposition}

\begin{proof}
Let $p$ and $q$ be two distinct primes. If $n= pq$, then by Remark \ref{partition} we get $V(\Gamma'(\mathbb{Z}_n)) = \mathcal{A}_p \cup \mathcal{A}_q$ such that $\mathcal{A}_p \cap \mathcal{A}_q = \emptyset $. In fact, $\Gamma'(\mathbb{Z}_n) = K_{\phi (q), \phi(p)}$ is a complete bipartite graph. Consequently, $\overline{\Gamma'(\mathbb{Z}_n)}$ is a disconnected graph.

Conversely, suppose $\overline{\Gamma'(\mathbb{Z}_n)}$ is disconnected. Clearly, for $n = p$ there is nothing to prove. If $n = p^\alpha$ for some $1 < \alpha \in \mathbb{N}$, then $\Gamma'(\mathbb{Z}_{p^\alpha})$ is a null graph. Consequently, $\overline{\Gamma'(\mathbb{Z}_{p^\alpha})}$ is a complete graph which is not possible.  If possible, let $n \ne pq$. Let $d_1$ and $d_2$ be the proper divisors of $n$ and let $x \in  \mathcal{A}_{d_1} $, $y \in  \mathcal{A}_{d_2} $. If $d_1 = d_2$ then clearly $x \sim y$ in  $\overline{\Gamma'(\mathbb{Z}_n)}$. If $ d_1 \ne d_2$ such that either $d_1 \mid d_2$ or $d_2 \mid d_1$ then $x \sim y$ in $\overline{\Gamma'(\mathbb{Z}_n)}$ (cf. Lemma \ref{adjacenyofvertex} ). If $d_1 \ne d_2$ and neither $d_1 \mid d_2$ nor $d_2 \mid d_1$, then there exist two primes $p_1$ and $p_2$ such that $p_1 \mid d_1$ and $p_2 \mid d_2$. Consequently, $x \sim z_1 \sim z_2 \sim z_3 \sim y$ in $\overline{\Gamma'(\mathbb{Z}_n)}$ for some $z_1 \in \mathcal{A}_{p_1}$, $z_2 \in \mathcal{A}_{p_1p_2}$ and $z_3 \in \mathcal{A}_{p_2}$. Thus, $\overline{\Gamma'(\mathbb{Z}_n)}$ is connected; a contradiction. Hence, $n$ must be a product of two distinct primes.
\end{proof}

Since $| V(\Gamma'(\mathbb{Z}_n))| = n - \phi(n) - 1$,  by using the Proposition \ref{connectednessofcomplement} in Theorem \ref{Spectralradius}, we have the following proposition.

\begin{proposition}
$\lambda(\Gamma'(\mathbb{Z}_n)) = | V(\Gamma'(\mathbb{Z}_n))|$ if and only if $n$ is a product of two distinct primes. Moreover, if $n = pq$ then $\lambda(\Gamma'(\mathbb{Z}_n)) = p + q - 2.$
\end{proposition}

Now we classify all those values of $n$ for which the algebraic connectivity and the vertex connectivity of $\Gamma'(\mathbb{Z}_n)$ are equal. The following theorem is useful in this study.

\begin{theorem}\cite{kirkland2002graphs}\label{equality of vertex and algebraic connectivity}
Let $\Gamma$ be a non-complete connected graph on $m$ vertices. Then $\kappa(\Gamma) = \mu(\Gamma)$ if and only if $\Gamma$ can be written as $\Gamma_1 \vee \Gamma_2$, where $\Gamma_1$ is a disconnected graph on $m- \kappa(\Gamma)$ vertices and $\Gamma_2$ is a graph on $\kappa(\Gamma)$ vertices with $\mu(\Gamma_2) \ge 2\kappa(\Gamma)-m$.
\end{theorem}

\begin{lemma} \label{vertexconnectivityofn = pq}
For distinct primes $p$ and $q$, if $n=pq$ where $p<q$ then $\kappa(\Gamma'(\mathbb{Z}_n)) = \delta(\Gamma'(\mathbb{Z}_n)) = p-1 $.
\end{lemma}

\begin{proof}
For $n = pq$, $\Gamma'(\mathbb{Z}_n)$ is a complete bipartite graph with partition sets $\mathcal{A}_p$ and $\mathcal{A}_q$. Hence, $\kappa(\Gamma'(\mathbb{Z}_n)) = \delta(\Gamma'(\mathbb{Z}_n)) = \text{min}\{ |\mathcal{A}_p|, |\mathcal{A}_q| \} = p-1$
\end{proof}

\begin{theorem}
For the graph $\Gamma'(\mathbb{Z}_n)$, we have $\mu(\Gamma'(\mathbb{Z}_n)) \le \kappa(\Gamma'(\mathbb{Z}_n))$. The equality holds if and only if $n$ is a product of two distinct primes.
\end{theorem}

\begin{proof}
By \cite{kirkland2002graphs}, for any graph $\Gamma$ which is not complete, we have  $\mu(\Gamma) \le \kappa(\Gamma)$. If $n=4$ then there is nothing to prove because $\Gamma'(\mathbb{Z}_4)$ is the graph of one vertex only. If $n \ne 4$ then $\Gamma'(\mathbb{Z}_n)$ is not a complete graph. Consequently, $\mu(\Gamma'(\mathbb{Z}_n)) \le \kappa(\Gamma'(\mathbb{Z}_n))$.

If $n$ is not a product of two distinct primes then by Proposition \ref{connectednessofcomplement} and by Theorem \ref{Spectralradius}, $\Gamma'(\mathbb{Z}_n)$ can not be written as the join of two graphs. Thus, by Theorem \ref{equality of vertex and algebraic connectivity}, we obtain $\mu(\Gamma'(\mathbb{Z}_n)) < \kappa(\Gamma'(\mathbb{Z}_n))$. If $n=pq$, where $p$ and $q$ are distinct primes such that $p<q$, then by Theorem \ref{Spectralradius}, Proposition \ref{connectednessofcomplement}, Theorem \ref{equality of vertex and algebraic connectivity} and Lemma \ref{vertexconnectivityofn = pq}, we obtain $\mu(\Gamma'(\mathbb{Z}_n)) = \kappa(\Gamma'(\mathbb{Z}_n)) = p-1$.  
\end{proof}
\section{The Wiener index of $\Gamma'(\mathbb{Z}_n)$}
In this section, we obtain the Wiener index of the cozero-divisor graph of the ring $\mathbb{Z}_{n}$ for arbitrary $n \in \mathbb{N}$. Consequently, we obtain the diameter of $\Gamma'(\mathbb{Z}_n)$ (see Proposition \ref{diameterpro}). For a prime $p$ and $1 \le \alpha \in \mathbb{N}$,  the graph $\Gamma'(\mathbb{Z}_p)$ is empty whereas $\Gamma'(\mathbb{Z}_{p^{\alpha}})$ is a null graph. Therefore $W(\Gamma'(\mathbb{Z}_p)) = W(\Gamma'(\mathbb{Z}_{p^\alpha})) = 0$.

\begin{theorem}\label{n=p_1p_2...p_k}
For $ 1 \le i \le \tau(n)-2$, let $d_i$'s be the proper divisors of $n$. If $n = p_1p_2 \cdots p_k$, where $p_i$'s are distinct primes and $2 \le k \in \mathbb{N}$, then \[W(\Gamma'(\mathbb{Z}_n)) = \sum_{i=1}^{2^k-2} \phi(\dfrac{n}{d_i})\left(\phi(\dfrac{n}{d_i})-1\right) \ + \  \dfrac{1}{2}\sum_{\substack{d_i \nmid d_j \\ d_j \nmid d_i}} \phi(\dfrac{n}{d_i})\phi(\dfrac{n}{d_j}) \ + \ 2\sum_{\substack{d_i \mid d_j \\ i\neq j}} \phi(\dfrac{n}{d_i})\phi(\dfrac{n}{d_j}).\]
\end{theorem}

\begin{proof}
To determine the Wiener index of $\Gamma'(\mathbb{Z}_n)$, we first obtain the distances between the vertices of each $\mathcal{A}_{d_i}$ and two distinct $\mathcal{A}_{d_i}$'s, respectively. For a proper divisor $d_i$ of $n$, let $x, y \in \mathcal{A}_{d_i}$. Since $\Gamma'(\mathbb{Z}_n)$ is connected, by Corollary \ref{partitionofcozerodivisorgraphisomorphic}, there exists a proper divisor $d_r$ of $n$ such that $x \sim z$ for each $x \in \mathcal{A}_{d_i}$ and $z \in \mathcal{A}_{d_r}$. Consequently, $d(x,y) = 2$ for any two distinct $x, y \in \mathcal{A}_{d_i}$. Now we obtain the distances between the vertices of any two distinct $\mathcal{A}_{d_i}$'s through the following cases.

\textbf{Case-1}: Neither $d_i\mid d_j$ nor $d_j \mid d_i$. By Lemma \ref{adjacenyofvertex}, $d(x,y) =1$ for every $x \in \mathcal{A}_{d_i}$ and $y \in \mathcal{A}_{d_j}$.

\textbf{Case-2}: $d_i\mid d_j$. For $x \in \mathcal{A}_{d_i}$ and $y \in \mathcal{A}_{d_j}$ we have $x \nsim y$. Without loss of generality, assume that $d_j = p_1p_2 \cdots p_md_i$, where $1 \leq m \leq k-2$. Since $d_j$ is a proper divisor of $n$ there exists a prime $p$ such that $p \nmid d_j$. Consequently, $p \nmid d_i$. It follows that for $x \in \mathcal{A}_{d_i}$ and $y \in \mathcal{A}_{d_j}$ there exists a $z \in \mathcal{A}_{p}$ such that $x \sim z \sim y$. Thus, $d(x,y)=2$ for each $x \in \mathcal{A}_{d_i}$ and $y \in \mathcal{A}_{d_j}$.\\

Thus, in view of all the possible distances between the vertices of $\Gamma'(\mathbb{Z}_n)$, we get
\begin{align*}
  W(\Gamma'(\mathbb{Z}_n)) &= \dfrac{1}{2}\sum_{u \in V(\Gamma'(\mathbb{Z}_n))}\sum_{v \in V(\Gamma'(\mathbb{Z}_n))} d(u,v)  \\
&=\dfrac{1}{2} \left[ \sum_{i=1}^{2^k-2} 2 |\mathcal{A}_{d_i}| \left(|\mathcal{A}_{d_i}| -1 \right) \ + \ \sum_{\substack{d_i \nmid d_j \\ d_j \nmid d_i}}|\mathcal{A}_{d_i}||\mathcal{A}_{d_j}| \right] \ + \ 2\sum_{\substack{d_i \mid d_j \\ i\neq j}}|\mathcal{A}_{d_i}||\mathcal{A}_{d_j}| \\
&= \sum_{i=1}^{2^k-2} \phi(\dfrac{n}{d_i})\left(\phi(\dfrac{n}{d_i})-1\right) \ + \  \dfrac{1}{2}\sum_{\substack{d_i \nmid d_j \\ d_j \nmid d_i}} \phi(\dfrac{n}{d_i})\phi(\dfrac{n}{d_j}) \ + \ 2\sum_{\substack{d_i \mid d_j \\ i\neq j}} \phi(\dfrac{n}{d_i})\phi(\dfrac{n}{d_j}).
\end{align*}
\end{proof}

\begin{corollary}
If $n = pq$, where $p, q$ are distinct primes, then $W(\Gamma'(\mathbb{Z}_n)) = (p-1)(q-1) + (p-1)(p-2) + (q-1)(q-2)$.
\end{corollary}

\begin{theorem}\label{general n}
Let $n = p_1^{n_1}p_2^{n_2}\cdots p_r^{n_r} \cdots p_k^{n_k}$ with $k \geq 2$, where $p_i$'s are distinct primes and let $D= \{ d_1, d_2, \ldots ,d_{\tau(n)-2} \}$ be the set of all proper divisors of $n$. For $d_i \mid d_j$, define
\begin{align*}
A &= \{ (d_i, d_j) \in D \times D \  | \ d_i \neq p_r^s \}; \\
B &= \{ (d_i, d_j) \in D \times D \  | \ d_i = p_r^s \ \text{and} \ \dfrac{n}{d_j} \neq p_r^t \}; \\
C &= \{ (d_i, d_j) \in D \times D \  | \ d_i = p_r^s \ \text{and} \  \dfrac{n}{d_j} = p_r^t \}.
\end{align*}
Then \[W(\Gamma'(\mathbb{Z}_n)) = \sum_{i=1}^{\tau (n)-2} \phi(\dfrac{n}{d_i}) \left(\phi(\dfrac{n}{d_i}) -1 \right) \ + \ \dfrac{1}{2}\sum_{\substack{d_i \nmid d_j \\ d_j \nmid d_i}}\phi(\dfrac{n}{d_i})\phi(\dfrac{n}{d_j}) \ + \ 2\sum_{(d_i, d_j) \in A}\phi(\dfrac{n}{d_i})\phi(\dfrac{n}{d_j}) 
 + \ 2\sum_{(d_i, d_j) \in B}\phi(\dfrac{n}{d_i})\phi(\dfrac{n}{d_j})\]  
 \[ \hspace{-10cm} + ~~~         3\sum_{(d_i, d_j) \in C}\phi(\dfrac{n}{d_i})\phi(\dfrac{n}{d_j}).\]
\end{theorem}

\begin{proof}
In view of Remark \ref{partition}, first we obtain all the possible distances between the vertices of $\mathcal{A}_{d_i}$ and $\mathcal{A}_{d_j}$, where $d_i$ and $d_j$ are proper divisors of $n$. If $i=j$ then by the proof of Theorem \ref{n=p_1p_2...p_k}, we get $d(x,y)=2$ for any two distinct $x,y \in \mathcal{A}_{d_i}$. Now suppose that $i \neq j$. If $d_i \nmid d_j$ and $d_j \nmid d_i$, then by Lemma \ref{adjacenyofvertex}, we get $d(x,y)=1$ for every $x \in \mathcal{A}_{d_i}$ and $y \in \mathcal{A}_{d_j}$. If $d_i \mid d_j$ then we obtain the possible distances through the following cases.

\textbf{Case-1}: $(d_i, d_j) \in A$. Since $d_i \mid d_j$, we have $x \nsim y$ for any $x \in \mathcal{A}_{d_i}$ and $y \in \mathcal{A}_{d_j}$. Note that $d_i \neq p_r^s$ implies that $d_i = p_1^{\beta_1}p_2^{\beta_2} \cdots p_m^{\beta_m}$ for some $\beta_i$'s $\in \mathbb{N} \cup \{ 0 \}$ and $m \geq 2$. Consequently, $d_j = p_1^{\alpha_1}p_2^{\alpha_2} \cdots p_m^{\alpha_m}$ for some $\alpha_i$'s $\in \mathbb{N} \cup \{ 0 \}$. Since $d_j$ is a proper divisor of $n$ there exists $l \in \{1, 2, \ldots, k\}$ such that $p_l^{n_l} \nmid d_j$. Also, $p_l^{n_l} \nmid d_i$. Further, $m \geq 2$ follows that $d_i \nmid p_l^{n_l}$ and $d_j \nmid p_l^{n_l}$. Now for any $x \in \mathcal{A}_{d_i}$, $y \in \mathcal{A}_{d_j}$ there exists a $z \in \mathcal{A}_{p_l^{n_l}}$ such that $x \sim z \sim y$. Thus $d(x,y)=2$ for every $x \in \mathcal{A}_{d_i}$ and $y \in \mathcal{A}_{d_j}$. 

\textbf{Case-2}: $d_i = p_r^s$ for some $r \in \{ 1,2, \ldots ,k \}$ and $1 \leq s \leq n_r$. Suppose $x\in \mathcal{A}_{d_i}$ and $y \in \mathcal{A}_{d_j}$. Then we obtain $d(x,y)$ in the following subcases: 

\hspace{.5cm}\textbf{Subcase-2.1}: $(d_i, d_j) \in B$. Suppose $d_j = p_1^{\alpha_1}p_2^{\alpha_2} \cdots p_r^{\alpha_r} \cdots p_k^{\alpha_k}$. Since $\dfrac{n}{d_j} \neq p_r^t$ there exists a prime $p_m \in \{ p_1, p_2, \ldots, p_k \} \setminus \{p_r\}$ and $\alpha_m \leq n_m$ such that $p_m^{n_m} \nmid d_j$. Consequently, $p_m^{n_m} \nmid d_i$. Moreover, $d_i \nmid p_m^{n_m}$ and $ d_j \nmid p_m^{n_m}$. Thus, for every $x \in \mathcal{A}_{d_i}$ and $y \in \mathcal{A}_{d_j}$, we get $x \sim z \sim y$ for some $z \in \mathcal{A}_{p_m^{n_m}}$. Hence, $d(x,y) =2$ for each $x \in \mathcal{A}_{d_i}$ and $y \in \mathcal{A}_{d_j}$. 

\hspace{.5cm}\textbf{Subcase-2.2}: $(d_i, d_j) \in C$. Then $d_j = p_1^{n_1}p_2^{n_2} \cdots p_r^{\alpha_r} \cdots  p_k^{n_k}$, where $n_r - \alpha_r =t \ge 1$. Since $d_i \mid d_j$, for each $x \in \mathcal{A}_{d_i}$ and $y \in \mathcal{A}_{d_j}$, we have $d(x,y) \geq 2$ (cf. Lemma \ref{adjacenyofvertex}). First, we show that $d(x,y) > 2$ for any $x \in \mathcal{A}_{d_i}$ and $y \in \mathcal{A}_{d_j}$. In this connection, it is sufficient to prove that for any proper divisor $d$ of $n$, we have either $d \mid d_j$ or $d_i \mid d$. Suppose that $d \nmid d_j$. Then $d = p_1^{\gamma_1}p_2^{\gamma_2} \cdots p_r^{\gamma_r} \cdots  p_k^{\gamma_k}$ together with $\gamma_r > \alpha_r$. Since $p_r^s = d_i \mid d_j$, we get $s\leq \alpha_r < \gamma_r$. Consequently, $d_i \mid d$. 

Since $n = p_1^{n_1}p_2^{n_2}\cdots p_r^{n_r} \cdots p_k^{n_k}$ with $k\geq2$, there exists a prime $q \neq p_r$ such that $q \mid n$. Clearly, $q \nmid d_i$ and $d_i \nmid q$. Also, $p_r^{n_r} \nmid q$ and $q \nmid p_r^{n_r}$. Since $\alpha_r < n_r$, we obtain $d_j \nmid p_r^{n_r}$ and $p_r^{n_r} \nmid d_j$. Thus, in view of Lemma \ref{adjacenyofvertex}, for any $x \in \mathcal{A}_{d_i}$ and $y \in \mathcal{A}_{d_j}$, there exist $z \in \mathcal{A}_q $ and $w \in \mathcal{A}_{p_r^{n_r}}$ such that $x \sim z \sim w \sim y$. Hence, $d(x,y) = 3$ for every $x \in \mathcal{A}_{d_i}$ and $y \in \mathcal{A}_{d_j}$. 
In view of the cases and arguments discussed in this proof, we have
\[W(\Gamma'(\mathbb{Z}_n)) = \dfrac{1}{2}\sum_{u \in V(\Gamma'(\mathbb{Z}_n))}\sum_{v \in V(\Gamma'(\mathbb{Z}_n)} d(u,v)  \]
\begin{align*}
&=\dfrac{1}{2} \left[ \sum_{i=1}^{\tau (n)-2} 2 |\mathcal{A}_{d_i}| \left(|\mathcal{A}_{d_i}| -1 \right) \ + \ \sum_{\substack{d_i \nmid d_j \\ d_j \nmid d_i}}|\mathcal{A}_{d_i}||\mathcal{A}_{d_j}| \right] \ + \ 2\sum_{(d_i, d_j) \in A}|\mathcal{A}_{d_i}||\mathcal{A}_{d_j}| \ + \ 2\sum_{(d_i, d_j) \in B}|\mathcal{A}_{d_i}||\mathcal{A}_{d_j}| \\
&\ \ \ + 3\sum_{(d_i, d_j) \in C}|\mathcal{A}_{d_i}||\mathcal{A}_{d_j}| \\
&= \sum_{i=1}^{\tau (n)-2} \phi(\dfrac{n}{d_i}) \left(\phi(\dfrac{n}{d_i}) -1 \right) \ + \ \dfrac{1}{2}\sum_{\substack{d_i \nmid d_j \\ d_j \nmid d_i}}\phi(\dfrac{n}{d_i})\phi(\dfrac{n}{d_j}) \ + \ 2\sum_{(d_i, d_j) \in A}\phi(\dfrac{n}{d_i})\phi(\dfrac{n}{d_j}) \\
&\ \ \ + \ 2\sum_{(d_i, d_j) \in B}\phi(\dfrac{n}{d_i})\phi(\dfrac{n}{d_j}) \ + \ 3\sum_{(d_i, d_j) \in C}\phi(\dfrac{n}{d_i})\phi(\dfrac{n}{d_j}).
\end{align*}
\end{proof}

Based on all the possible distances obtained in this section, the following proposition is easy to observe.

\begin{proposition}\label{diameterpro}
The diameter of $\Gamma'(\mathbb{Z}_n)$ is given below:
\[{\rm diam}(\Gamma'(\mathbb{Z}_n)) =
  \begin{cases}
0  & \;\; \;n = 4, \\
2  & \;\;\;n = p_1p_2\cdots p_k,~ k \geq 2\\
3 & \;\;\;{\rm otherwise.}
  \end{cases}\]
\end{proposition}

Now we conclude this paper with an illustration of Theorem \ref{general n} for $n= 72$.

\begin{example}
Consider $n= 2^3 \cdot 3^2 = 72$. Then the number of proper divisor $\tau (n)$ of $n$ is $\prod_{i=1}^{k} (n_i + 1)- 2 = 10$. Therefore, $D= \{ 2, \ 2^2, \ 2^3, \ 3, \ 3^2, \ 2 \cdot 3, \ 2^2 \cdot 3, \ 2^3 \cdot 3, \ 2 \cdot 3^2, \ 2^2 \cdot 3^2 \}$. Let $ d_1=2, \ d_2=2^2, \ d_3=2^3, \ d_4=3, \ d_5=3^2, \ d_6=2 \cdot 3, \ d_7=2^2 \cdot 3, \ d_8=2^3 \cdot 3, \ d_9=2 \cdot 3^2, \ d_{10}=2^2 \cdot 3^2 $. By Lemma \ref{valueof partition},  we obtain $|\mathcal{A}_{d_1}| = 12, \ |\mathcal{A}_{d_2}| = 6, \ |\mathcal{A}_{d_3}| = 6, \ |\mathcal{A}_{d_4}| = 8, \ |\mathcal{A}_{d_5}| = 4, \ |\mathcal{A}_{d_6}| = 4, \ |\mathcal{A}_{d_7}| = 2, \ |\mathcal{A}_{d_8}| = 2, \ |\mathcal{A}_{d_9}| = 2, \ |\mathcal{A}_{d_{10}}| = 1$. Now
\[\dfrac{1}{2}\sum_{i=1}^{10} 2 |\mathcal{A}_{d_i}| \left(|\mathcal{A}_{d_i}| -1 \right) = [132+30+30+56+12+12+2+2+2+0] = 278\]
and
\[  \dfrac{1}{2}\sum_{\substack{d_i \nmid d_j \\ d_j \nmid d_i}}|\mathcal{A}_{d_i}||\mathcal{A}_{d_j}| = [ 96+48+48+24+24+12+48+24+24+12+12+6+16+8+8+4+4+2]  = 420\]
The sets $A, \ B$ and  $C $ defined in Theorem \ref{general n} are
 \begin{align*}
     A &= \{ (d_6, d_7),\ (d_6, d_8),\ (d_6, d_9),\ (d_6, d_{10}),\ (d_7, d_8),\ (d_7, d_{10}),\ (d_9, d_{10}) \}; \\
     B &= \{ (d_1, d_2),\ (d_1, d_3),\ (d_1, d_6), \ (d_1, d_7),\ (d_1, d_8),\ (d_2, d_3),\ (d_2, d_7),\ (d_2, d_8),\ (d_3, d_8),\ (d_4, d_5),\ (d_4, d_6),\ (d_4, d_7),\\
     &\ \ \ \ \ (d_4, d_9),\ (d_4, d_{10}),\ (d_5, d_9),\ (d_5, d_{10})\}; \\
     C &= \{ (d_1, d_9),\ (d_1, d_{10}),\ (d_2, d_{10}),\ (d_4, d_8)\}.
 \end{align*}
 Consequently,
 \begin{align*}
2\sum_{(d_i, d_j) \in A}|\mathcal{A}_{d_i}||\mathcal{A}_{d_j}| &= 2[ 8+8+8+4+4+2+2]  = 72 \\
2\sum_{(d_i, d_j) \in B}|\mathcal{A}_{d_i}||\mathcal{A}_{d_j}| &= 2[ 72+72+48+24+24+36+12+12+12+32+32+16+16+8+8+4]  = 856\\
3\sum_{(d_i, d_j) \in C}|\mathcal{A}_{d_i}||\mathcal{A}_{d_j}| &= 3[24+12+6+16]  = 174
\end{align*} Hence, the Wiener index of $\Gamma'(\mathbb{Z}_{72})$ is given by \\
 \begin{align*}
     W(\Gamma(\mathbb{Z}_{72})) &= \dfrac{1}{2} \left[ \sum_{i=1}^{\tau (n)-2} 2 |\mathcal{A}_{d_i}| \left(|\mathcal{A}_{d_j}| -1 \right) \ + \ \sum_{\substack{d_i \nmid d_j \\ d_j \nmid d_i}}|\mathcal{A}_{d_i}||\mathcal{A}_{d_j}| \right] \ + \ 2\sum_{(d_i, d_j) \in A}|\mathcal{A}_{d_i}||\mathcal{A}_{d_j}| \ + \ 2\sum_{(d_i, d_j) \in B}|\mathcal{A}_{d_i}||\mathcal{A}_{d_j}| \\
&\ \ \ + 3\sum_{(d_i, d_j) \in C}|\mathcal{A}_{d_i}||\mathcal{A}_{d_j}| \\
&= 278+420+72+856+174 = 1800.
 \end{align*}
\end{example}

\section*{Declarations} We are thankful to the referee for valuable suggestions which helped in improving the presentation of the paper. 
\vspace{.3cm}

\textbf{Funding}: The second author gratefully acknowledge for providing financial support to CSIR  (09/719(0093)/2019-EMR-I)) government of India. 

\vspace{.3cm}
\textbf{Conflicts of interest/Competing interests}: There is no conflict of interest regarding the publishing of this paper. 

\vspace{.3cm}
\textbf{Availability of data and material (data transparency)}: Not applicable.

\end{document}